\documentclass[12pt,twoside, a4paper]{amsart}
\usepackage[utf8]{inputenc}
\usepackage{amsmath,amssymb,amsthm,mathtools, hyperref}
\usepackage{geometry, color}
\newtheorem{theorem}{Theorem}[section]
\newtheorem{lemma}[theorem]{Lemma}

\theoremstyle{definition}
\newtheorem{definition}[theorem]{Definition}
\newtheorem{remark}[theorem]{Remark}
\numberwithin{equation}{section}
\newcommand{\bC}{\mathbb{C}}
\newcommand{\bD}{\mathbb{D}}
\newcommand{\cK}{\mathcal{K}}
\newcommand{\cH}{\mathcal{H}}
\newcommand{\cF}{\mathcal{F}}
\newcommand{\cE}{\mathcal{E}}
\newcommand{\cG}{\mathcal{G}}

\newcommand{\cQ}{\mathcal{Q}}
\newcommand{\cD}{\mathcal{D}}
\newcommand{\bT}{\mathbb{T}}
\newcommand{\tcK}{{\widetilde \cK}}

\newcommand{\tcE}{{\widetilde {\mathcal E}}}
\newcommand{\tcF}{{\widetilde {\mathcal F}}}
\newcommand{\tcG}{{\widetilde {\mathcal G}}}

\newcommand{\tV}{{\widetilde V}}
\newcommand{\tY}{{\widetilde  Y}}
\newcommand{\tTheta}{{\widetilde \Theta}}
\newcommand{\tXi}{{\widetilde \Xi}}

\newcommand{\tW}{{\widetilde W}}

\newcommand{\sbm}[1]{\left[\begin{smallmatrix} #1
		\end{smallmatrix}\right]}

\title[Isometries satisfying automorphic covariance relation]{Functional Model for Isometric Pairs Satisfying an Automorphic Covariance Relation - I}
\author[Ghosh]{Sourav Ghosh}
\address{Department of Mathematics, Indian Institute of Science Education and Research Pune, Maharashtra 411008}
\email{sourav.ghosh@students.iiserpune.ac.in}
\author[Sau]{Haripada Sau}
\address{Department of Mathematics, Indian Institute of Science Education and Research Pune, Maharashtra 411008}
\email{hsau@iiserpune.ac.in}
\thanks{Sau was supported by the Anusandhan National Research Foundation (ANRF), Government of India, under the Mathematical Research Impact-Centric Support (MATRICS) scheme (Grant No. ANRF/ARGM/2025/000573/MTR)}
\subjclass[2020]{Primary 47A45, 47A13; Secondary 47B38.}
\keywords{Isometric pair, automorphic covariance relation, functional model, Wold decomposition, Bercovici--Douglas--Foias (BDF) model, composition operator, weighted defect operator, unitary invariant.}

\begin{document}

\begin{abstract}
We derive a canonical functional model for a pair of isometric operators $(V_1, V_2)$ acting on a Hilbert space $\mathcal{H}$ that satisfy the automorphic covariance relation $V_1 V_2 = V_2 \varphi(V_1)$, where $\varphi$ is an automorphism of the unit disk $\mathbb{D}$. This extends the classical Bercovici--Douglas--Foias functional model for commuting isometric pairs to a dynamically twisted setting. When both $V_1$ and $V_2$ are pure isometries (shifts), the model simplifies to $(M_z, M_{\Theta} W_\varphi^*)$ acting on $H^2(\mathcal{F})$, where $W_\varphi$ is a unitary weighted composition operator on the Hardy space $H^2$ and $\Theta \in H^\infty(\mathbb{D}, \mathcal{B}(\mathcal{F}))$ is an operator-valued inner function such that $M_{\Theta}$ is a shift. As an application, we establish a complete unitary classification theorem for automorphically covariant isometric pairs by identifying a complete set of joint unitary invariants. Notably, our results are novel even in the special case of rotation automorphisms ($V_1 V_2 = q V_2 V_1$ with $|q| = 1$).
\end{abstract}
\maketitle

\section{Introduction}
The structural analysis of isometric operators on Hilbert spaces originates in the foundational work of Wold \cite{Wold1954}, who established that any single isometry decomposes uniquely into a direct sum of a unitary operator and a \textit{shift} (also called a \textit{pure isometry}), i.e., an isometry $V$ such that $V^{*n}\to 0$ strongly as $n\to\infty$. Extending this structural paradigm to systems of multiple operators has been a central theme in multivariable operator theory and function theory.

For commuting isometric pairs $(V_1, V_2)$, the first functional model was introduced by Berger, Coburn, and Lebow \cite{BCL1978} by embedding the pair into the Wold decomposition of the product isometry $V = V_1 V_2$. Subsequent structural investigation by Ga\c{s}par and Ga\c{s}par \cite{GG2004} and Popovici \cite{Popovici2004} further clarified multivariable Wold-type decompositions for bi-isometries and commuting isometric pairs. A major conceptual advance was achieved in a series of papers by Bercovici, Douglas, and Foias \cite{BDF2006, BDF2010, BDF2012}, who constructed canonical functional models (the BDF models) for bi-isometries and multi-isometries by embedding the pair directly into the functional-model Wold decomposition of a single component isometry $V_1$, rather than the product $V_1 V_2$. Recently, Ball and the second author of the present manuscript developed in \cite{BS2026}  a unified theory of dilations and canonical models for commuting contraction pairs, establishing complete unitary classification theorems via a set of parameters arising from the model theory of  Berger, Coburn, and Lebow \cite{BCL1978}.

While the model theory for commuting isometric pairs, governed by the trivial dynamics $\varphi(z) = z$, is well established, non-commutative operator pairs arise naturally when shift operators interact with conformal domain dynamics. A central non-commutative structure is governed by the \emph{automorphic covariance relation}
\begin{equation} \label{eq:intro_covar}
V_1 V_2 = V_2 \varphi(V_1),
\end{equation}
where $\varphi \in \operatorname{Aut}(\mathbb{D})$.
Pairs satisfying \eqref{eq:intro_covar} provide the canonical operator-theoretic framework for representations of dynamical systems $(\mathbb{D}, \varphi)$. Specifically, covariant pairs $(V_1, V_2)$ parametrize the isometric representations of the semicrossed product algebra $\mathcal{A}(\mathbb{D}) \rtimes_\varphi \mathbb{Z}_{\ge 0}$, where $\mathcal{A}(\mathbb{D})$ is the Banach algebra of holomorphic self-maps of $\mathbb{D}$ that extends continuously over the unit circle $\mathbb{T}$. The covariance dilation theorems allow to extend this representation to an isometric representation  of the crossed product algebra $C(\mathbb{T}) \rtimes_\varphi \mathbb{Z}_{\ge 0}$ (see e.g., \cite{BP1998, DK2010, DK2012, GS2026}). This representation simultaneously generates the universal $C^*$-algebras associated with Hardy shifts twisted by automorphic composition operators ($C_\varphi:f\mapsto f\circ\varphi$). Building a model theory for \eqref{eq:intro_covar} thus extends classical multivariable spectral theory from static function theory to dynamically twisted non-commutative domains.

Constructing a canonical BDF-type model and proving a complete unitary classification theorem for automorphically covariant isometric pairs presents substantial new hurdles that do not occur in the classical commuting setting. For instance,

\begin{enumerate}
    \item In the BDF model for commuting pairs, embedding $V_2$ into the Wold decomposition of $V_1$ yields a block entry $A$ commuting with the shift $M_z^\cF$, which by the classical theory of von Neumann algebra forces $A$ to be $M_\Theta$ where $\Theta$ is an analytic multiplier. Under the covariance relation \eqref{eq:intro_covar}, $A$ satisfies $M_z^\cF A = A M_\varphi^\cF$, forcing $A$ to be of the twisted form $M_\Theta C_{\varphi^{-1}}$ to couple the multiplier $M_\Theta$ with the conformal composition operator $C_{\varphi^{-1}}$. This structural result is Lemma \ref{L:WightCompos} below.

    \item The defect operator associated with the second component $V_2$ is governed by the operator $ I - C_{\varphi^{-1}}^* M_\Theta^* M_\Theta C_{\varphi^{-1}}$. Due to the conformal change of variables induced by $C_{\varphi^{-1}}$, the computation of the adjoint picks up the conformal derivative factor $|\varphi'(\zeta)|$ on $\mathbb{T}$. As a result, the defect factorization requires analyzing the \emph{weighted defect operator}
    $$
    \Delta_{\Theta, \varphi}(\zeta) := \left( I_\cF - |\varphi'(\zeta)| \Theta(\varphi(\zeta))^* \Theta(\varphi(\zeta)) \right)^{\frac{1}{2}} \quad \mbox{for }\zeta \in \mathbb{T}.
    $$


\end{enumerate}

The primary aim of this paper is to overcome these challenges by deriving a self-contained functional model and proving a complete unitary classification theorem for automorphically covariant isometric pairs. Notably, our results are novel even in the special case of the $q$-twisted relation $V_1V_2=qV_2V_1$, where $q$ is a unimodular complex number.

\section{The Main Results}
This section is divided into three subsections. The first establishes several preliminary results on isometric/unitary pairs satisfying automorphic covariance relations; these lemmas are used later and may be of independent interest. The remaining two subsections present the primary functional model and classification theorems.
\subsection{A few key lemmas}
Each of the following lemmas will be used in the derivation of the model and the unitary invariance.
\begin{lemma}\label{L:WoldReduces}
Let $V$ be an isometry and $W$ be a unitary acting on a Hilbert space $\mathcal{H}$ satisfying the covariance relation
$$
W V = V f(W)
$$
for some finite Blaschke function $f(z)=\prod_{j=1}^n e^{i\theta}(z-a_j)(1-\overline{a_j}z)^{-1}$. Then the Wold decomposition of $V$ reduces $W$ into the direct sum of two unitary operators. 
\end{lemma}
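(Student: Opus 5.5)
The plan is to show that the unitary part $\mathcal{H}_u := \bigcap_{n\ge 0} V^n\mathcal{H}$ of the Wold decomposition $\mathcal{H} = \mathcal{H}_u \oplus \mathcal{H}_s$ of $V$ satisfies $W\mathcal{H}_u = \mathcal{H}_u$. Since $W$ is unitary, this gives $W^*\mathcal{H}_u = W^{-1}\mathcal{H}_u = \mathcal{H}_u$. Hence $\mathcal{H}_u$, and therefore also $\mathcal{H}_s = \mathcal{H}_u^\perp$, reduces $W$. Both restrictions are then unitary, because the restriction of a unitary to a reducing subspace is unitary.

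\emph{Step 1: $f(W)$ is a well-defined unitary.} Since $\sigma(W)\subseteq\mathbb{T}$ and every pole $1/\overline{a_j}$ of $f$ lies outside $\overline{\mathbb{D}}$, $f(W)$ is defined by the continuous functional calculus. It is unitary because $|f|=1$ on $\mathbb{T}$. The same argument applies to every iterate $f_n := f\circ\cdots\circ f$ ($n$ times), which is again a finite Blaschke product. Moreover $f(W)$ coincides with the rational expression $e^{i\theta}\prod_j (W-a_j)(I-\overline{a_j}W)^{-1}$.

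\emph{Step 2: intertwining for rational functions.} Let $r$ be a rational function with poles off $\overline{\mathbb{D}}$. I claim that
$$
r(W)V = V\,r(f(W)).
$$
For monomials, $W^kV = Vf(W)^k$ follows by induction from $WV=Vf(W)$, and linearity extends this to polynomials. For the inverse factors, note that $(I-\bar a W)V = V(I-\bar a f(W))$. Both $I-\bar a W$ and $I-\bar a f(W)$ are invertible, since $|a|<1$ and both $W$ and $f(W)$ are unitary. Multiplying on the left by $(I-\bar aW)^{-1}$ and on the right by $(I-\bar af(W))^{-1}$ gives $(I-\bar aW)^{-1}V = V(I-\bar af(W))^{-1}$. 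Taking products of these pieces proves the claim. Applying it with $r=f$ gives $f(W)V = Vf_2(W)$, and by induction
$$
WV^n = V^n f_n(W) \quad\text{for all } n\ge 1 .
$$

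\emph{Step 3: conclusion.} Since $f_n(W)$ is unitary, hence surjective, we get $WV^n\mathcal{H} = V^n f_n(W)\mathcal{H} = V^n\mathcal{H}$. Because $W$ is injective, it commutes with intersections, so
$$
W\mathcal{H}_u = \bigcap_n WV^n\mathcal{H} = \mathcal{H}_u .
$$
Combined with the opening remark, this finishes the argument.

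The main point needing care is Step 2: passing the covariance relation from $W$ to the rational function $f(W)$ requires handling the inverse factors, and this is where invertibility of $I-\bar a f(W)$, coming from the unitarity established in Step 1, is used. Everything else is formal.
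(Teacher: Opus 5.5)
Your proof is correct and follows essentially the same route as the paper. Both arguments establish $WV^n = V^n f^{\circ n}(W)$, use unitarity of $f^{\circ n}(W)$ to get $W(\operatorname{Ran}V^n)=\operatorname{Ran}V^n$, and pass to the intersection $\bigcap_n \operatorname{Ran}V^n$. Your Step 2 additionally justifies the inductive identity by pushing the covariance relation through the rational functional calculus, which the paper only asserts.
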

\begin{proof}
If follows by an inductive argument that
$$
W V^n = V^n (f^{\circ n}(W)).
$$
where $f^{\circ n}$ denotes the $n$-fold composition of $f$. Since $f$ is a finite Blaschke function, $f|_{\mathbb{T}}:\mathbb{T}\to\mathbb{T}$ is a continuous function, it therefore follows by the spectral theorem that $f(W)$ is a unitary operator, whenever $W$ is so. This observation together with the identity above yields
$$
W(\operatorname{Ran}V^n) = V^n f^{\circ n}(W)\mathcal{H} = V^n\mathcal{H} = \operatorname{Ran}V^n.
$$
Thus, each subspace $\mathrm{Ran}(V^n)$ reduces $W$. Since the unitary part of $V$ in its Wold decomposition is defined on $\mathcal{H}_u(V) = \bigcap_{n=0}^\infty \mathrm{Ran}(V^n)$, it follows that $\mathcal{H}_u(V)$ is reducing for $W$. 
\end{proof}

\begin{lemma}\label{L:UniShiftCov}
Let $W$ be a unitary operator on $H^2(\mathcal{F})$, and let $\varphi: \mathbb{D} \to \mathbb{D}$ be an analytic automorphism. If $W$ satisfies the covariance relation
\begin{align}\label{cov}
W M_z = M_z \varphi(W),
\end{align}
then there is a unitary operator $u$ on the coefficient space $\cF$ so that $W$ decomposes as the direct sum
$$
W = \bigoplus_{n=0}^\infty \varphi^{\circ n}(u).
$$
\end{lemma}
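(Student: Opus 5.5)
We use the Wold decomposition of $M_z$ on $H^2(\cF)$, namely $H^2(\cF)=\bigoplus_{n\ge0} z^n\cF$, and show that $W$ respects each summand. Since $\varphi$ is an automorphism of $\bD$, it is a (degree one) finite Blaschke function. The same induction as in Lemma \ref{L:WoldReduces} then gives $W M_z^n = M_z^n\,\varphi^{\circ n}(W)$ for every $n$. Moreover, each $\varphi^{\circ n}(W)$ is unitary by the continuous functional calculus, since $\varphi^{\circ n}$ maps $\bT$ onto $\bT$.

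First we show that $\operatorname{Ran} M_z^n = z^nH^2(\cF)$ reduces $W$. From the intertwining relation,
$$W(z^nH^2(\cF)) = M_z^n\varphi^{\circ n}(W)H^2(\cF) = z^nH^2(\cF),$$
because $\varphi^{\circ n}(W)$ is onto. Since $W$ is unitary and maps this closed subspace onto itself, the subspace is reducing. Consequently each wandering piece
$$z^n\cF = z^nH^2(\cF)\ominus z^{n+1}H^2(\cF)$$
also reduces $W$.

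Next we set $u := W|_{\cF}$, where $\cF$ is identified with the constants. This is a unitary on $\cF$ because $\cF$ reduces $W$. On $z^n\cF$ we compute with the intertwining relation: for $\xi\in\cF$,
$$W(z^n\xi)=W M_z^n \xi = M_z^n\,\varphi^{\circ n}(W)\xi.$$
Since $\cF$ reduces $W$, it reduces every continuous function of $W$. Hence $\varphi^{\circ n}(W)|_{\cF}=\varphi^{\circ n}(W|_{\cF})=\varphi^{\circ n}(u)$. This uses the standard fact that the functional calculus commutes with restriction to reducing subspaces: approximate $\varphi^{\circ n}$ uniformly on $\bT$ by trigonometric polynomials in $z,\bar z$, which correspond to polynomials in $W$ and $W^*$. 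Therefore $W(z^n\xi)=z^n\varphi^{\circ n}(u)\xi$. Under the unitary identification $z^n\cF\cong\cF$, this says $W|_{z^n\cF}\cong \varphi^{\circ n}(u)$, and summing over $n$ gives $W=\bigoplus_{n\ge0}\varphi^{\circ n}(u)$.

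The main point needing care is the reducing step: surjectivity of $\varphi^{\circ n}(W)$ is what upgrades the inclusion $W(z^nH^2)\subseteq z^nH^2$ to equality. Everything else is the bookkeeping with the functional calculus described above.
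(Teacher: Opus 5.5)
Your proof is correct and follows essentially the paper's route. In both, unitarity of $\varphi(W)$ makes $zH^2(\cF)$, and hence $\cF$, reduce $W$; one sets $u=W|_\cF$ and uses that the functional calculus commutes with restriction to reducing subspaces. The only difference is organizational: you invoke the iterated relation $WM_z^n=M_z^n\varphi^{\circ n}(W)$ (as in Lemma~\ref{L:WoldReduces}) and evaluate $\varphi^{\circ n}(W)$ on $\cF$, whereas the paper inducts one step at a time, computing $\varphi(W)$ on $z^k\cF$ from the inductive hypothesis.
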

\begin{proof}
Since $W$ is a unitary and $\varphi$ is an automorphism, $\varphi(W)$ is a unitary too and therefore it is trivial to see from \eqref{cov} that
$W$ maps $\operatorname{Ran}(M_z)$ surjectively onto $\operatorname{Ran}(M_z)$. Thus $\operatorname{Ran}(M_z )$ is reducing for $W$. Let us denote
$$
u := W|_{\mathcal{F}}.
$$ Then $u$ is a unitary operator on $\mathcal{F}$.

For any vector $x \in \mathcal{F} \subset H^2(\mathcal{F})$, apply \eqref{cov} to see that
$$
W(z x) = W M_z x = M_z \varphi(W) x = M_z (\varphi(u) x) = z \varphi(u) x.
$$
This shows that $W(z \mathcal{F}) = z \mathcal{F}$ and $W|_{z \mathcal{F}} = \varphi(u)$.

We use mathematical induction to conclude the proof. Suppose $W|_{z^k \mathcal{F}} = \varphi^{\circ k}(u)$ for some $k \geq 0$. By functional calculus, 
$$
\varphi(W)|_{z^k \mathcal{F}} = \varphi\left(\varphi^k(u)\right) = \varphi^{k+1}(u).
$$ 
Thus for any $x \in \mathcal{F}$, we apply \eqref{cov} to see that
$$
W(z^{k+1} x) = W M_z (z^k x) = M_z \varphi(W) (z^k x) = M_z \left( z^k \varphi^{k+1}(u) x \right) = z^{k+1} \varphi^{k+1}(u) x.
$$
Hence $W|_{z^{k+1} \mathcal{F}} = \varphi^{k+1}(u)$, completing the induction. 
\end{proof}

\begin{lemma}\label{L:WightCompos}
Let $\varphi$ be an automorphism and $A$ be a bounded operator on $H^2(\mathcal{E})$ such that 
$$
\varphi(M_z)A=AM_z.
$$Then $A=M_{\Theta}C_\varphi$ for some $\Theta$ in $H^{\infty}(\mathcal{B(E)})$ with $\|\Theta\|_\infty \leq \|A\|\sqrt{\|\varphi'\|_{\infty,\bT}}$.
\end{lemma}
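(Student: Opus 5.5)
The plan is to strip off the composition factor and reduce to the classical commutant of the unilateral shift. Let $C_\varphi f = f\circ\varphi$ on $H^2(\cE)$. Since $\varphi$ is an automorphism, $C_\varphi$ is bounded and invertible with $C_\varphi^{-1}=C_{\varphi^{-1}}$. The pointwise identity $(zf)\circ\varphi=\varphi\cdot(f\circ\varphi)$ gives the intertwining relation
$$
C_\varphi M_z = M_\varphi C_\varphi, \qquad\text{equivalently}\qquad M_z C_{\varphi^{-1}} = C_{\varphi^{-1}} M_\varphi .
$$
Note also that $\varphi(M_z)=M_\varphi$, by the holomorphic functional calculus, since $\varphi$ is holomorphic on a neighbourhood of $\overline{\bD}$. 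So the hypothesis reads $M_\varphi A = A M_z$.

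\textbf{Step 1: the reduced operator commutes with $M_\varphi$.} Define $B:=A C_{\varphi^{-1}}$. Then
$$
M_\varphi B = A M_z C_{\varphi^{-1}} = A C_{\varphi^{-1}} M_\varphi = B M_\varphi .
$$
If we can show $B=M_\Theta$ with $\Theta\in H^\infty(\mathcal{B}(\cE))$, then $A=BC_\varphi=M_\Theta C_\varphi$, as required.

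\textbf{Step 2: upgrade to commuting with $M_z$.} This is the step I expect to need care. The inverse automorphism $\psi=\varphi^{-1}$ is again a disk automorphism, so it is holomorphic on a disk $|w|<r$ with $r>1$. Hence its Taylor series $\psi(w)=\sum_k c_k w^k$ satisfies $\sum_k |c_k|<\infty$. Since $M_\varphi$ is an isometry, $\|M_\varphi^k\|=1$ for all $k$, so
$$
\psi(M_\varphi)=\sum_k c_k M_\varphi^k
$$
converges in norm. This operator equals $M_{\psi\circ\varphi}=M_z$: each partial sum is multiplication by the corresponding partial sum of $\psi\circ\varphi$, and these converge uniformly on $\overline{\bD}$.

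Since $B$ commutes with every $M_\varphi^k$, it commutes with their norm limit $M_z$. The classical description of the commutant of the shift $M_z$ on $H^2(\cE)$ then yields $B=M_\Theta$ for some $\Theta\in H^\infty(\mathcal{B}(\cE))$, with $\|\Theta\|_\infty=\|B\|$.

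\textbf{Step 3: the norm bound.} We have $\|\Theta\|_\infty=\|B\|\le \|A\|\,\|C_{\varphi^{-1}}\|$, so it suffices to bound $\|C_{\varphi^{-1}}\|$. For $f\in H^2(\cE)$, use boundary values and change variables $\eta=\varphi(\zeta)$ on $\bT$. Since $\varphi$ is a diffeomorphism of $\bT$, normalized arc length transforms by $dm(\eta)=|\varphi'(\zeta)|\,dm(\zeta)$. Therefore
$$
\|C_{\varphi^{-1}}f\|^2=\int_\bT \|f(\varphi^{-1}(\eta))\|^2\,dm(\eta)=\int_\bT \|f(\zeta)\|^2\,|\varphi'(\zeta)|\,dm(\zeta)\le \|\varphi'\|_{\infty,\bT}\,\|f\|^2 .
$$
This gives $\|\Theta\|_\infty\le\|A\|\sqrt{\|\varphi'\|_{\infty,\bT}}$.
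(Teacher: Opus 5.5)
Your proof is correct, but it takes a different route from the paper's.

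\textbf{The paper's route.} It never appeals to the commutant of the shift. It defines $\Theta(z)e:=A(1\otimes e)(z)$ directly. It then uses $A\,p(M_z)=p(M_\varphi)A$ and density of polynomials to obtain $(Ag)(z)=\Theta(z)g(\varphi(z))$. Boundedness of $\Theta$ comes from a reproducing-kernel estimate: testing $A$ on $k_z\otimes e_1$ against $k_w\otimes e_2$, with $z$ matched to $\varphi(w)$. This gives the pointwise bound $\|\Theta(w)\|\le\|A\|\bigl((1-|\varphi(w)|^2)/(1-|w|^2)\bigr)^{1/2}=\|A\|\,|\varphi'(w)|^{1/2}$.

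\textbf{Your route.} You strip off the composition factor by setting $B=AC_{\varphi^{-1}}$. You upgrade $BM_\varphi=M_\varphi B$ to $BM_z=M_zB$ through the norm-convergent calculus $\varphi^{-1}(M_\varphi)=M_z$. This step is handled correctly, since $M_\varphi$ is an isometry and the Taylor coefficients of $\varphi^{-1}$ are absolutely summable. You then invoke the classical commutant theorem. The constant comes from $\|C_{\varphi^{-1}}\|^2\le\|\varphi'\|_{\infty,\mathbb{T}}$, proved by the same change of variables on $\mathbb{T}$ that the paper later uses to compute $C_{\varphi^{-1}}^*$.

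\textbf{What each buys.} Given the standard commutant theorem, your argument is shorter and more transparent. It identifies $\Theta$ as the symbol of $AC_{\varphi^{-1}}$, with $\|\Theta\|_\infty=\|AC_{\varphi^{-1}}\|$ exactly. The price is that you use invertibility of $\varphi$ essentially twice: to form $C_{\varphi^{-1}}$, and to recover $M_z$ from $M_\varphi$. The paper's kernel argument is self-contained. Apart from the final identity $(1-|\varphi(w)|^2)/(1-|w|^2)=|\varphi'(w)|$, it only needs that ratio to be bounded on $\mathbb{D}$. That is why it adapts to non-invertible symbols such as finite Blaschke products, as indicated in the remark following the lemma.
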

\begin{proof}
It is easy to see that $\varphi(M_z)M_{\Theta}C_\varphi=M_{\Theta}C_\varphi M_z$ for any $\Theta \in H^{\infty}(\mathcal{B(E)})$. To prove the converse, observe that  $\varphi(M_z)A=AM_z$ implies $T_{p\circ \varphi}A=AT_p$ for all $p\in \bC[z]$. Define $\Theta:\bD\to \mathcal{B(E)}$ by 
$$
\Theta(z)e=A(1\otimes e)(z).
$$
Hence for every scalar-valued polynomial $p$,
\begin{align*}
    A(p\otimes e)(z)=p(\varphi(z))\Theta(z)e .
\end{align*}By density of $\mathcal{E}$-valued polynomials in $H^2(\mathcal{E})$ and boundedness of $A$, the same identity extends to all $g\in H^2(\mathcal{E})$:
\begin{align}\label{mult-compo}
Ag(z)=\Theta(z)g( \varphi(z))
\quad\mbox{for all } z\in\bD.
\end{align}
By the reproducing property of the kernel $k\otimes I_\cE$, we have 
$$
\langle \Theta(z)e_1,e_2\rangle=\langle A(1\otimes e_1)(z),e_2\rangle=\langle A(1\otimes e_1),k_z\otimes e_2\rangle.
$$ This implies $\Theta(z)$ is bounded for all $z\in \bD$, and $\Theta$ is holomorphic. Once we prove that $\Theta$ is bounded, the conclusion follows immediately. For all $z,w\in\bD$, we compute 
\begin{align*}
k_z(\varphi(w))\langle\Theta(w)e_1,e_2 \rangle _\cE &= k_z(\varphi(w))\langle e_1,\Theta(w)^*e_2 \rangle _\cE\\
&=\left\langle k_z(\varphi(w)) e_1,\Theta(w)^*e_2\right\rangle_\cE\\
&=\left\langle \Theta(w) k_z(\varphi(w)) e_1,e_2\right\rangle_\cE\\
&=\langle A(k_z\otimes e_1)(w),e_2\rangle_{\cE}=\langle A(k_z\otimes e_1),k_w\otimes e_2\rangle_{H^2(\cE)}.    
\end{align*}
In the computation above, we used \eqref{mult-compo} for $g=k_z\otimes e_1$. We now take the modulus both sides to get 
\begin{align*}
    |k_z(\varphi(w))||\langle \Theta(w)(e_1),e_2\rangle|=|\langle A(k_z\otimes e_1),k_w\otimes e_2\rangle|\leq \|A\|\|k_z\|\|k_w\|\|e_1\|\|e_2\|.
\end{align*} Putting $z=\overline{\varphi(w)}$ yields $$|\langle\Theta(w)e_1,e_2\rangle|\leq \|A\| \sqrt{\dfrac{1-|\varphi(w)|^2}{1-|w|^2}}\|e_1\|\|e_2\|=\|A\|\sqrt{|\varphi'(w)|}\|e_1\|\|e_2\|.
$$It then implies that for all $w\in\bD$, 
$$
\|\Theta(w)\|=\sup_{\|e_1\|=\|e_2\|=1}|\langle \Theta(w)e_1,e_2\rangle|\leq \|A\|\sqrt{\|\varphi'\|_\infty}.
$$ Hence we conclude from \eqref{mult-compo} that $A=M_\Theta C_\varphi$.
\end{proof}
\begin{remark}
 The above lemma is true for a larger class of functions than the automorphism. In \cite[Theorem~2.3]{GS2026}, we showed that Lemma~\ref{L:WightCompos} holds for finite Blaschke functions as well.
\end{remark}
\begin{lemma}\label{lem:H2density}
For any $\varphi \in \operatorname{Aut}(\mathbb{D})$ and Hilbert space $\cF$,
$$
\overline{\bigcup_{n=0}^\infty \varphi^{-n} H^2(\cF)} = L^2(\cF).
$$
\end{lemma}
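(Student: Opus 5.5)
The plan is to read $\varphi^{-n}H^2(\cF)$ as the image of $H^2(\cF)$ under multiplication by the unimodular function $\varphi^{-n}=\overline{\varphi}^{\,n}$ on $\bT$, and to reduce everything to the scalar case. Then I would use the Beurling--Helson description of $M_z$-invariant subspaces of $L^2$ to show that the closure must be all of $L^2$.

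\emph{Reduction to the scalar case.} Write $\mathcal{M}_\cF$ for the closure on the left-hand side. For any $e\in\cF$ and $h\in H^2$ we have $\overline{\varphi}^{\,n}h\otimes e\in \overline{\varphi}^{\,n}H^2(\cF)$. Hence $\mathcal{M}_\cF\supseteq \mathcal{M}_{\bC}\otimes e$ for every $e\in\cF$, where $\mathcal{M}_{\bC}$ is the scalar version. If $\mathcal{M}_{\bC}=L^2$, then $\mathcal{M}_\cF$ contains $L^2\otimes e$ for all $e$. The algebraic span of these is dense in $L^2(\cF)$, so $\mathcal{M}_\cF=L^2(\cF)$. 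It therefore suffices to treat $\cF=\bC$.

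\emph{Structure of $\mathcal{M}:=\mathcal{M}_{\bC}$.} Since $\varphi$ is inner, $\varphi H^2\subseteq H^2$. Multiplying by $\overline{\varphi}^{\,n+1}$ gives $\overline{\varphi}^{\,n}H^2\subseteq\overline{\varphi}^{\,n+1}H^2$, so the union is an increasing union of subspaces and its closure $\mathcal{M}$ is a closed subspace. Moreover $z\,\overline{\varphi}^{\,n}H^2\subseteq\overline{\varphi}^{\,n}H^2$, so $\mathcal{M}$ is invariant under the bilateral shift $M_z$ on $L^2$. Clearly $\mathcal{M}\supseteq H^2\ni 1$.

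\emph{Applying Beurling--Helson.} By the Beurling--Helson theorem, a closed $M_z$-invariant subspace of $L^2(\bT)$ takes one of two forms:
\begin{itemize}
\item it is reducing, i.e. equal to $\chi_E L^2$ for a measurable set $E\subseteq\bT$;
\item it is of the form $qH^2$ with $q$ unimodular.
\end{itemize}
In the first case, $1\in\mathcal{M}$ forces $\chi_E=1$ a.e., so $\mathcal{M}=L^2$.

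In the second case, $\overline{\varphi}^{\,n}\in qH^2$ for every $n$. Thus $\overline{q}\,\overline{\varphi}^{\,n}$ is a unimodular function lying in $H^2$, i.e. an inner function $\theta_n$, and $\overline{q}=\varphi^n\theta_n$. So the single inner function $\overline{q}$ (taking $n=0$) is divisible by $\varphi^n$ for every $n$. Since $\varphi$ has a simple zero at the point $a=\varphi^{-1}(0)\in\bD$, this means $\overline{q}$ vanishes to infinite order at $a$. Then $\overline{q}\equiv 0$, contradicting unimodularity. Hence only the first case occurs, and $\mathcal{M}=L^2$.

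The main point needing care is this last step, namely the correct invocation of the Helson form of Beurling's theorem for non-reducing invariant subspaces of $L^2$ and the passage from $\overline{q}\,\overline{\varphi}^{\,n}\in H^2$ to divisibility of inner functions. Everything else is routine bookkeeping with the inclusions $\varphi H^2\subseteq H^2$ and $zH^2\subseteq H^2$.
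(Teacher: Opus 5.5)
Your proof is correct, but it takes a different route from the paper's.

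\emph{The paper's argument.} The paper works only with the functions $\varphi^m$. The union $\bigcup_n \varphi^{-n}H^2$ contains $\operatorname{span}\{\varphi^m : m\in\mathbb{Z}\}$. Because $\varphi|_{\mathbb{T}}$ is a homeomorphism of $\mathbb{T}$, this span is a unital, conjugation-closed ($\overline{\varphi^m}=\varphi^{-m}$ on $\mathbb{T}$), point-separating subalgebra of $C(\mathbb{T})$. Stone--Weierstrass then gives uniform density, hence density in $L^2$.

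\emph{Your argument.} You use the full subspace structure instead. The closure is $M_z$-invariant and contains $1$, so by the Helson classification it is either $\chi_E L^2$, which forces $E=\mathbb{T}$, or $qH^2$ with $q$ unimodular. The second case is impossible, because the inner function $\overline{q}$ would be divisible by every power of $\varphi$ and so would vanish to infinite order at $\varphi^{-1}(0)$.

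\emph{What each approach buys.} The paper's argument is shorter and entirely elementary, but it depends essentially on $\varphi$ being injective on $\mathbb{T}$. Yours uses heavier machinery but never uses injectivity. It therefore works verbatim for any inner $\varphi$ with a zero in $\mathbb{D}$, in particular for finite Blaschke products of degree $\ge 2$. There $\varphi|_{\mathbb{T}}$ is many-to-one, so $\operatorname{span}\{\varphi^m\}$ no longer separates points and the Stone--Weierstrass argument breaks down. This is relevant to the extension the paper announces in its concluding remarks.

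\emph{A small ordering point.} In your reduction step you implicitly use that the closure $\mathcal{M}_\cF$ is a linear subspace. This follows from the nesting $\overline{\varphi}^{\,n}H^2(\cF)\subseteq\overline{\varphi}^{\,n+1}H^2(\cF)$, which holds by the same one-line argument you give, but you only verify it later and only in the scalar case.
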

\begin{proof}
It suffices to establish the scalar case $L^2$. Since $\varphi \in \operatorname{Aut}(\mathbb{D})$, the composition operator $C_\varphi f = f \circ \varphi$ is a bounded automorphism on $H^2$. Since $H^2 = \overline{\operatorname{span}}\{z^k : k \geq 0\}$, we have $H^2 = \overline{\operatorname{span}}\{\varphi^k : k \geq 0\}$.
Taking unions over $n \geq 0$ yields:
$$
\bigcup_{n=0}^\infty \varphi^{-n} H^2 \supseteq \operatorname{span}\{\varphi^m : m \in \mathbb{Z}\}.
$$
Because $\varphi \colon \mathbb{T} \to \mathbb{T}$ is a homeomorphism, $\operatorname{span}\{\varphi^m : m \in \mathbb{Z}\}$ is a point-separating subalgebra of $C(\mathbb{T})$ closed under complex conjugation. By Stone--Weierstrass, it is dense in $C(\mathbb{T})$ and thus dense in $L^2$. Tensoring with $\cF$ yields $L^2(\cF)$.
\end{proof}

Finally, the we recall the following lemma from \cite[Lemma 2.3.1]{BS2026}.
\begin{lemma}\label{L:AuxLemma}
\begin{enumerate}
\item
If
$\cK$ and $\cK'$ are Hilbert spaces, $U$ is a unitary operator on $\cK$, $V$ is a pure isometry on $\cK'$, and $\Gamma \colon \cK \to \cK'$
is a bounded linear operator such that $\Gamma U = V \Gamma$, then $\Gamma = 0$.

\item Suppose that $V$ and $V'$ are pure isometries on $\cK_1$ and $\cK_1'$ respectively, $U$ and $U'$ are unitary operators on
$\cK_2$ and $\cK'_2$ respectively, and $\Gamma = \sbm{ \Gamma_{11} & \Gamma_{12} \\ \Gamma_{21} & \omega }$
is a unitary operator from $\cK_1 \oplus \cK_2$ to $\cK'_1 \oplus \cK'_2$ which intertwines $V \oplus U$ with $V' \oplus U'$:
\begin{equation}   \label{block-intertwine}
\begin{bmatrix} \Gamma_{11}  & \Gamma_{12}  \\ \Gamma_{21} & \omega \end{bmatrix} \begin{bmatrix} V & 0 \\ 0 & U \end{bmatrix}
= \begin{bmatrix} V' & 0 \\ 0 & U' \end{bmatrix} \begin{bmatrix} \Gamma_{11}  & \Gamma_{12}  \\ \Gamma_{21} & \omega \end{bmatrix}.
\end{equation}
Then $\Gamma$ is block-diagonal, i.e.,  $\Gamma_{12} = 0$ and $\Gamma_{21} = 0$.
\end{enumerate}
\end{lemma}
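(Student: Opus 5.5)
For part (1), I will iterate the intertwining relation to get $\Gamma U^n = V^n \Gamma$ for every $n\ge 0$. Since $U$ is unitary, this gives $\Gamma = V^n \Gamma U^{*n}$, so $\operatorname{Ran}\Gamma \subseteq \operatorname{Ran} V^n$ for all $n$. Hence $\operatorname{Ran}\Gamma\subseteq\bigcap_n \operatorname{Ran}V^n$, and this intersection is $\{0\}$ because $V$ is pure. Therefore $\Gamma=0$. Equivalently, for $k\in\cK$ we have $V^{*n}\Gamma k = \Gamma U^{*n}k$, so $\|\Gamma k\| = \|V^{*n}\Gamma k\|\to 0$, using that $V^{*n}\to 0$ strongly.

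For part (2), I will compare entries of \eqref{block-intertwine}.

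\textbf{The $(2,1)$ entry.} This entry reads $\Gamma_{21}V = U'\Gamma_{21}$, which is the wrong direction to apply part (1) directly. Taking adjoints gives $V^*\Gamma_{21}^* = \Gamma_{21}^* U'^*$. Since $U'^*$ is unitary, iterating yields $\Gamma_{21}^* = V^{*n}\Gamma_{21}^* U'^{n}$. Then for $x\in\cK'_2$,
$$\|\Gamma_{21}^*x\| = \|V^{*n}\Gamma_{21}^*U'^n x\|.$$
Strong convergence $V^{*n}\to0$ alone does not control this, because the vector $\Gamma_{21}^*U'^n x$ varies with $n$. This is the main obstacle, and the unitarity of $\Gamma$ is what resolves it.

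\textbf{Using unitarity of $\Gamma$.} From $\Gamma(V\oplus U) = (V'\oplus U')\Gamma$ with $\Gamma$ unitary, we get $(V\oplus U) = \Gamma^*(V'\oplus U')\Gamma$. Taking adjoints, $\Gamma^*$ also intertwines the adjoints: $(V\oplus U)^*\Gamma^* = \Gamma^*(V'\oplus U')^*$. Reading off the $(1,2)$ entry of $\Gamma^*$, which is $\Gamma_{21}^*$, gives $V^*\Gamma_{21}^* = \Gamma_{21}^* U'^*$, the same relation as above.

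Now use the intertwining $\Gamma^*(V'\oplus U') = (V\oplus U)\Gamma^*$, which holds since $\Gamma^*=\Gamma^{-1}$. Its $(1,2)$ entry gives $\Gamma_{21}^*U' = V\Gamma_{21}^*$. This is exactly the hypothesis of part (1) with $\cK=\cK'_2$, unitary $U'$, pure isometry $V$, and operator $\Gamma_{21}^*:\cK'_2\to\cK_1$. Part (1) then gives $\Gamma_{21}^*=0$, so $\Gamma_{21}=0$.

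\textbf{The $(1,2)$ entry.} Directly from \eqref{block-intertwine}, the $(1,2)$ entry is $\Gamma_{12}U = V'\Gamma_{12}$. This is again the setting of part (1), with unitary $U$ on $\cK_2$ and pure isometry $V'$, so $\Gamma_{12}=0$.

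Hence $\Gamma$ is block-diagonal.
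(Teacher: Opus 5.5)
Your proof is correct and is the standard argument. Part (1) follows from $\Gamma=V^n\Gamma U^{*n}$, which forces $\operatorname{Ran}\Gamma\subseteq\bigcap_n\operatorname{Ran}V^n=\{0\}$. Part (2) comes from applying (1) twice: to the $(1,2)$ entry $\Gamma_{12}U=V'\Gamma_{12}$, and to the $(1,2)$ entry $\Gamma_{21}^*U'=V\Gamma_{21}^*$ of the reversed intertwining $\Gamma^*(V'\oplus U')=(V\oplus U)\Gamma^*$, which is exactly where unitarity of $\Gamma$ enters. The paper itself gives no proof and only cites \cite[Lemma~2.3.1]{BS2026}.

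The intermediate adjoint relation $V^*\Gamma_{21}^*=\Gamma_{21}^*(U')^*$ that you derive is superfluous; only $\Gamma^*=\Gamma^{-1}$ is needed.
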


\subsection{The model construction}\label{SS:FunctModel}

Let $(V_1, V_2)$ be an isometric operator pair on a Hilbert space $\cH$ satisfying the automorphic covariance relation
\begin{equation} \label{AutomorphicCov}
V_1 V_2 = V_2 \varphi(V_1),
\end{equation}
where $\varphi \in \operatorname{Aut}(\mathbb{D})$. Our goal is to obtain a model for $(V_1, V_2)$. This will be achieved in several intermediate unitary identification. For notational simplicity, we do not make distinction in notation of the original pair $(V_1,V_2)$ and its intermediate unitary conjugations.

For a contractive operator $T$, the following notations for defect operator and defect space will be used throughout the paper:
$$
D_T=(I-T^*T)^{1/2} \quad \mbox{and}\quad
\cD_T=\operatorname{Ran}D_T.
$$For an isometric operator $V$, let $Q_{V^*}$ denote the orthogonal projection
$$
Q_{V^*} = \operatorname{SOT-}\lim V^nV^{*n}
$$and $\cQ_{V_1^*}$ denote its range.

Let
$
\cF := \cD_{V_1^*}$ denote the defect space of $V_1^*$. By the Wold decomposition theorem, the isometry $V_1$ is unitarily equivalent to
$$
 \begin{bmatrix} M_z^\cF & 0 \\ 0 & W_1 \end{bmatrix}:\begin{bmatrix} H^2(\cF) \\ \cQ_{V_1^*} \end{bmatrix}\to \begin{bmatrix} H^2(\cF) \\ \cQ_{V_1^*} \end{bmatrix},
$$
where $W_1$ is a unitary operator on $\cQ_{V_1^*}=\operatorname{Ran}Q_{V_1^*}$. Representing $V_2$ on $\begin{bmatrix} H^2(\cF) \\ \cQ_{V_1^*} \end{bmatrix}$ as $\begin{bmatrix} A & B \\ C & E \end{bmatrix}$, condition \eqref{AutomorphicCov} becomes
$$
\begin{bmatrix} M_z^\cF & 0 \\ 0 & W_1 \end{bmatrix} \begin{bmatrix} A & B \\ C & E \end{bmatrix} = \begin{bmatrix} A & B \\ C & E \end{bmatrix} \begin{bmatrix} M_\varphi^\cF & 0 \\ 0 & \varphi(W_1) \end{bmatrix}.
$$
This yields:
\begin{enumerate}
    \item[(M1)] $M_z^\cF A = A M_\varphi^\cF$, or equivalently, $\varphi^{-1}(M_z^\cF) A = A M_z^\cF$ since $\varphi$ is an automorphism. Applying Lemma \ref{L:WightCompos} with $\varphi^{-1}$ gives $A = M_\Theta C_{\varphi^{-1}}$ for a bounded analytic function $\Theta$ on $\mathbb{D}$ with values in $\mathcal{B}(\cF)$.
    \item[(M2)] $M_z^\cF B = B \varphi(W_1)$. Since $W_1$ is unitary and $M_z^\cF$ is a pure shift operator, by Part (1) of Lemma \ref{L:AuxLemma}, $B=0$.
    \item[(M3)] $W_1 C = C M_\varphi^\cF$ and $W_1 E = E \varphi(W_1)$.
\end{enumerate}
All this amounts to the representation 
$$
\begin{bmatrix} M_\Theta C_{\varphi^{-1}} & 0 \\ C & E \end{bmatrix}:\begin{bmatrix} H^2(\cF) \\ \cQ_{V_1^*} \end{bmatrix}\to \begin{bmatrix} H^2(\cF) \\ \cQ_{V_1^*} \end{bmatrix}
$$ of the isometry $V_2$ on the Wold decomposition space of $V_1$. Since $V_2$ is an isometry, the above block matrix representation readily implies the following:
\begin{enumerate}
    \item[(I1)] $E$ is an isometry acting on $\cQ_{V_1^*}$.
    \item[(I2)] $\operatorname{Ran} C \subset \cD_{E^*} = (\operatorname{Ran} E)^\perp$.
    \item[(I3)] $C^* C = I_{H^2(\cF)} - C_{\varphi^{-1}}^* M_\Theta^* M_\Theta C_{\varphi^{-1}}$.
\end{enumerate}
The Wold decomposition applied to the isometry $E$ splits the space $\cQ_{V_1^*}$ as
$$
\cQ_{V_1^*} = \begin{bmatrix} H^2(\cD_{E^*}) \\ \cQ_{E^*} \end{bmatrix}
$$ and the isometry $E$ as
$$
E=\begin{bmatrix}
    M_z^{\cD_{E^*}} & 0\\
    0 & Y_2
\end{bmatrix}:
\begin{bmatrix}
     H^2(\cD_{E^*}) \\ \cQ_{E^*} 
\end{bmatrix} \to \begin{bmatrix}
     H^2(\cD_{E^*}) \\ \cQ_{E^*} 
\end{bmatrix}.
$$Under this decomposition, we must have
$
C = \begin{bmatrix} C_1 \\ 0 \end{bmatrix}
$ where $C_1 \colon H^2(\cF) \to \cD_{E^*}$ satisfies
\begin{align}\label{IdenC1}
C_1^* C_1 = I_{H^2(\cF)} - C_{\varphi^{-1}}^* M_\Theta^* M_\Theta C_{\varphi^{-1}}.
\end{align}
Apply Lemma \ref{L:WoldReduces} to the unitary/isometry pair $(W_1,E)$ to get $W_1$ in the direct sum form
$$
W_1=\begin{bmatrix}
    W_1^1 & 0\\
    0 & Y_1
\end{bmatrix}:
\begin{bmatrix} H^2(\cD_{E^*}) \\ \cQ_{E^*} \end{bmatrix}\to \begin{bmatrix} H^2(\cD_{E^*}) \\ \cQ_{E^*} \end{bmatrix}.
$$Furthermore, application of Lemma \ref{L:UniShiftCov} to the unitary/shift pair $(W_1^1, M_z^{\cD_{E^*}})$ yields a unitary operator $u$ on $\cD_{E^*}$ so that
$$
W_1^1 = I_{H^2}\otimes_\varphi u := \bigoplus_{n\geq 0} \varphi^{\circ n}(u).
$$
All this unitary identification transforms our original isometric pair $(V_1, V_2)$ to 
\[
\left(
\begin{bmatrix} M_z^\cF & 0 & 0 \\ 0 & I_{H^2} \otimes_\varphi u & 0 \\ 0 & 0 & Y_1 \end{bmatrix}, 
 \begin{bmatrix} M_\Theta C_{\varphi^{-1}} & 0 & 0 \\ C_1 & M_z^{\cD_{E^*}} & 0 \\ 0 & 0 & Y_2 \end{bmatrix}\right)
:\begin{bmatrix} H^2(\cF) \\ H^2(\cD_{E^*}) \\ \cQ_{E^*} \end{bmatrix}\to\begin{bmatrix} H^2(\cF) \\ H^2(\cD_{E^*}) \\ \cQ_{E^*} \end{bmatrix}
\]
where $u$ is a unitary operator on $\cD_{E^*}$, $(Y_1, Y_2)$ a unitary pair on $\cQ_{E^*}$ satisfying 
$$
Y_1 Y_2 = Y_2 \varphi(Y_1),
$$and $C_1$ satisfies the intertwining relation 
\begin{align}\label{Intu1}
(I_{H^2}\otimes_\varphi u) C_1 = C_1 M_\varphi^\cF.
\end{align}

We shall resume this construction of the model after we compute $C_{\varphi^{-1}}^* M_\Theta^* M_\Theta C_{\varphi^{-1}}$ on $L^2(\cF)$.
For $f, g \in L^2(\cF)$,
$$
\langle C_{\varphi^{-1}} f, g \rangle_{L^2} = \int_\mathbb{T} \langle f(\varphi^{-1}(w)), g(w) \rangle_\cF \, d\sigma(w).
$$
Applying change of variables $\zeta = \varphi^{-1}(w)$ or equivalently $w = \varphi(\zeta)$, we get
$$
\langle C_{\varphi^{-1}} f, g \rangle_{L^2} = \int_\mathbb{T} \langle f(\zeta), g(\varphi(\zeta)) \rangle_\cF \, |\varphi'(\zeta)| d\sigma(\zeta) = \langle f, C_{\varphi^{-1}}^* g \rangle_{L^2}.
$$
Thus, the adjoint $C_{\varphi^{-1}}^*$ acts on $g \in L^2(\cF)$ as:
$$
(C_{\varphi^{-1}}^* g)(\zeta) = |\varphi'(\zeta)| \, g(\varphi(\zeta)).
$$
We now compute $C_{\varphi^{-1}}^* M_\Theta^* M_\Theta C_{\varphi^{-1}} f$:
\begin{align*}
\left( C_{\varphi^{-1}}^* M_\Theta^* M_\Theta C_{\varphi^{-1}} f \right)(\zeta) &= |\varphi'(\zeta)| \left( M_\Theta^* M_\Theta C_{\varphi^{-1}} f \right)(\varphi(\zeta)) \\
&= |\varphi'(\zeta)| \, \Theta(\varphi(\zeta))^* \Theta(\varphi(\zeta)) f(\zeta).
\end{align*}
Therefore
$$
I_{L^2(\cF)} - C_{\varphi^{-1}}^* M_\Theta^* M_\Theta C_{\varphi^{-1}} = M_{\Delta_{\Theta, \varphi}}^2,
$$
where 
$$
\Delta_{\Theta, \varphi}(\zeta) = \left( I_\cF - |\varphi'(\zeta)| \Theta(\varphi(\zeta))^* \Theta(\varphi(\zeta)) \right)^{1/2}.
$$
In view of \eqref{IdenC1}, we get
$$
C_1^* C_1 = P_{H^2(\cF)} \Delta_{\Theta, \varphi}^* \Delta_{\Theta, \varphi} |_{H^2(\cF)}
$$
giving the expression of the operator $C_1$  
$$
C_1 = \iota \Delta_{\Theta, \varphi}|_{H^2(\cF)}
$$ for some isometry $\iota \colon \overline{\Delta_{\Theta, \varphi} H^2(\cF)} \to \cD_{E^*}$. Since the isometry $\iota$ takes values in $\cD_{E^*}$, which is identified as the space of constant functions in $H^2(\cD_{E^*})$, the intertwining condition \eqref{Intu1} translates to 
\begin{align}\label{Intu1'}
 u \iota = \iota M_\varphi \quad \text{on } \overline{\Delta_{\Theta, \varphi} H^2(\cF)}.
\end{align}

We now extend the domain of $\iota$. We need the following density result.
\begin{lemma}\label{lem:Ddense}
The subspace $\cD_{\text{dense}} := \bigcup_{n=0}^\infty \varphi^{-n} \Delta_{\Theta, \varphi} H^2(\cF)$ is dense in $\mathcal{M} := \overline{\Delta_{\Theta, \varphi} L^2(\cF)}$.
\end{lemma}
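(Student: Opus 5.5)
The plan is to reduce the statement to Lemma~\ref{lem:H2density}, using two facts: $\Delta_{\Theta,\varphi}$ acts as a bounded multiplication operator on $L^2(\cF)$, and multiplication by the unimodular function $\varphi^{-n}$ commutes with it. First, since $\|\Theta\|_\infty<\infty$ and $|\varphi'|$ is bounded on $\bT$, the function $\Delta_{\Theta,\varphi}$ is a bounded measurable $\mathcal{B}(\cF)$-valued function. Hence $M_{\Delta_{\Theta,\varphi}}$ is bounded on $L^2(\cF)$. We also need $\Delta_{\Theta,\varphi}(\zeta)$ to be well defined, i.e.\ $I_\cF-|\varphi'(\zeta)|\Theta(\varphi(\zeta))^*\Theta(\varphi(\zeta))\ge 0$ a.e. This follows from the identity $I-C_{\varphi^{-1}}^*M_\Theta^*M_\Theta C_{\varphi^{-1}}=C^*C\ge0$ (I3), after passing from $H^2(\cF)$ to $L^2(\cF)$ as described next.

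That passage can be made by a bilateral-shift argument. The operator $I-C_{\varphi^{-1}}^*M_\Theta^*M_\Theta C_{\varphi^{-1}}$ is multiplication by a function, so it commutes with $M_z$. Its compressions to $\bar z^{n}H^2(\cF)$ are unitarily equivalent to its compression to $H^2(\cF)$, and these compressions are positive. Since $\bigcup_n \bar z^nH^2(\cF)$ is dense in $L^2(\cF)$, the operator is positive on $L^2(\cF)$. Alternatively, we may simply take positivity as implicit in the definition given just before the lemma.

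Next, $\cD_{\text{dense}}\subseteq\mathcal M$ is clear. Multiplication by $\varphi^{-n}$ is a scalar multiplication, so it commutes with the operator multiplication $M_{\Delta_{\Theta,\varphi}}$. Thus
\[
\varphi^{-n}\Delta_{\Theta,\varphi}H^2(\cF)=\Delta_{\Theta,\varphi}\bigl(\varphi^{-n}H^2(\cF)\bigr)\subseteq \Delta_{\Theta,\varphi}L^2(\cF).
\]
The multiplication by $\varphi^{-n}$ makes sense because $|\varphi|=1$ on $\bT$.

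For density, let $h\in\Delta_{\Theta,\varphi}L^2(\cF)$, say $h=\Delta_{\Theta,\varphi}f$ with $f\in L^2(\cF)$. By Lemma~\ref{lem:H2density}, there are $f_k\in\bigcup_n\varphi^{-n}H^2(\cF)$ with $f_k\to f$ in $L^2(\cF)$. Each $f_k$ has the form $\varphi^{-n_k}g_k$ with $g_k\in H^2(\cF)$. Therefore
\[
\Delta_{\Theta,\varphi}f_k=\varphi^{-n_k}\Delta_{\Theta,\varphi}g_k\in\cD_{\text{dense}}.
\]
Boundedness of $M_{\Delta_{\Theta,\varphi}}$ gives $\Delta_{\Theta,\varphi}f_k\to h$. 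So $\Delta_{\Theta,\varphi}L^2(\cF)$ lies in the closure of $\cD_{\text{dense}}$, and taking closures yields $\overline{\cD_{\text{dense}}}=\mathcal M$.

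One check is needed here: the union $\bigcup_n\varphi^{-n}H^2(\cF)$ must be a linear subspace, so that a single $n_k$ works for each approximant. This holds because $\varphi H^2\subseteq H^2$, which makes the spaces $\varphi^{-n}H^2(\cF)$ increasing in $n$.

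The main obstacle is only the boundedness and positivity of $\Delta_{\Theta,\varphi}$ as a multiplier on $L^2(\cF)$. Once this is in hand, the rest is a direct transfer of Lemma~\ref{lem:H2density} through the bounded operator $M_{\Delta_{\Theta,\varphi}}$.
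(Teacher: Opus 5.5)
Your proof is correct, and it differs from the paper's only in how density is transferred. The paper uses an annihilator argument. If $g\in\mathcal M$ is orthogonal to $\cD_{\text{dense}}$, then self-adjointness of $\Delta_{\Theta,\varphi}$ gives $\Delta_{\Theta,\varphi}g\perp\varphi^{-n}H^2(\cF)$ for all $n$. Lemma~\ref{lem:H2density} then forces $\Delta_{\Theta,\varphi}g=0$, so $g\in\ker\Delta_{\Theta,\varphi}\cap(\ker\Delta_{\Theta,\varphi})^\perp=\{0\}$.

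You run the direct version of the same reduction. You push the dense set $\bigcup_n\varphi^{-n}H^2(\cF)$ forward through the bounded operator $M_{\Delta_{\Theta,\varphi}}$, using that a bounded operator maps a dense set onto a dense subset of the closure of its range. The ingredients are the same: the scalar $\varphi^{-n}$ commutes with $\Delta_{\Theta,\varphi}$, and Lemma~\ref{lem:H2density} applies. Your version needs only boundedness of the multiplier. It does not use self-adjointness or the identity $\overline{\operatorname{Ran}}\,\Delta_{\Theta,\varphi}=(\ker\Delta_{\Theta,\varphi})^\perp$, so it would also work for a non-self-adjoint bounded multiplier.

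Your bilateral-shift argument for positivity of $I_\cF-|\varphi'(\zeta)|\,\Theta(\varphi(\zeta))^*\Theta(\varphi(\zeta))$ a.e.\ on $\bT$ is a useful addition. The paper only has positivity of the compression to $H^2(\cF)$, from (I3), and takes the square root on $L^2(\cF)$ without comment. Your argument is correct: the operator commutes with $M_z$ on $L^2(\cF)$, its compressions to $\bar z^nH^2(\cF)$ are all unitarily equivalent to the positive compression to $H^2(\cF)$, and their union is dense.

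The subspace check is not actually needed for your approximation step. Lemma~\ref{lem:H2density} concerns the closure of the union itself, so each approximant $f_k$ already lies in some single $\varphi^{-n_k}H^2(\cF)$. The increasing-chain observation is still true, and it is what makes $\cD_{\text{dense}}$ a linear subspace.
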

\begin{proof}
Since $\Delta_{\Theta, \varphi}(w)$ acts on coefficients while $\varphi(w)^{-n}$ is a scalar, multiplication by $\varphi^{-n}$ commutes with $\Delta_{\Theta, \varphi}$. Thus $\cD_{\text{dense}} \subset \mathcal{M}$.

Suppose $g \in \mathcal{M}$ satisfies $g \perp \cD_{\text{dense}}$. For all $n \geq 0$ and $f \in H^2(\cF)$,
$$
0 = \langle g, \varphi^{-n} \Delta_{\Theta, \varphi} f \rangle_{L^2(\cF)} = \langle g, \Delta_{\Theta, \varphi} (\varphi^{-n} f) \rangle_{L^2(\cF)}.
$$
Since $\Delta_{\Theta, \varphi}$ is a self-adjoint operator on $L^2(\cF)$, we obtain
$$
\langle \Delta_{\Theta, \varphi} g, \varphi^{-n} f \rangle_{L^2(\cF)} = 0.
$$
By Lemma~\ref{lem:H2density}, $\Delta_{\Theta, \varphi} g \perp L^2(\cF)$, forcing $\Delta_{\Theta, \varphi} g = 0$, so $g \in \operatorname{Ker}(\Delta_{\Theta, \varphi})$. Since $\mathcal{M} = \overline{\operatorname{Ran}(\Delta_{\Theta, \varphi})} = (\operatorname{Ker}(\Delta_{\Theta, \varphi}))^\perp$, we have $g \in \operatorname{Ker}(\Delta_{\Theta, \varphi}) \cap (\operatorname{Ker}(\Delta_{\Theta, \varphi}))^\perp = \{0\}$.
\end{proof}

We now come back to the model construction.
By Lemma~\ref{lem:Ddense}, we extend $\iota$ to an isometry $\hat{\iota} \colon \overline{\Delta_{\Theta, \varphi} L^2(\cF)} \to \cD_{E^*}$ defined on $\cD_{\text{dense}}$ by
$$
\hat{\iota}(\varphi^{-n} \Delta_{\Theta, \varphi} f) = u^{*n} \iota(\Delta_{\Theta, \varphi} f) \quad \mbox{where } f \in H^2(\cF) \mbox{ and } n \geq 0.
$$
It is evident that the extended isometry $\hat\iota:\overline{\Delta_{\Theta,\varphi}L^2(\cF)}\to\cD_{E^*}$  satisfies the intertwining $\hat{\iota} M_{\varphi^{-1}} = u^* \hat{\iota}$.
With the help of \eqref{Intu1'}, it is elementary to show that this extended isometry also satisfies the intertwining $\hat{\iota} M_\varphi = u \hat{\iota}$. Consequently, $\operatorname{Ran}\hat\iota\subset \cD_{E^*}$ is reducing for the unitary operator $u$ and $\hat{\iota}:\overline{\Delta_{\Theta,\varphi}L^2(\cF)}\to \operatorname{Ran}\hat\iota$ unitarily intertwines $M_\varphi|_{\overline{\Delta_{\Theta, \varphi} L^2(\cF)}}$ with $u|_{\operatorname{Ran} \hat{\iota}}$.
Thus with
$$
\cE_0 = \cD_{E^*} \ominus \operatorname{Ran} \hat{\iota}
$$we have the decomposition of $u:\cD_{E^*}\to \cD_{E^*}$:
$$
u = \begin{bmatrix} M_\varphi|_{\overline{\Delta_{\Theta, \varphi} L^2(\cF)}} & 0 \\ 0 & u_{\cE_0} 
\end{bmatrix}:
\begin{bmatrix}
    \overline{\Delta_{\Theta, \varphi} L^2(\cF)} \\
    \cE_0
\end{bmatrix}\to \begin{bmatrix}
    \overline{\Delta_{\Theta, \varphi} L^2(\cF)} \\
    \cE_0
\end{bmatrix}.
$$
Putting all this together we see that the original isometric pair is jointly unitarily equivalent to 
\begin{align}\label{ModelPair}
\left( \begin{bmatrix} 
M_z^\cF & 0 & 0 & 0 \\ 
0 & I_{H^2} \otimes_\varphi M_\varphi & 0 & 0 \\ 
0 & 0 & I_{H^2} \otimes_\varphi u_{\cE_0} & 0 \\ 
0 & 0 & 0 & Y_1 
\end{bmatrix}, 
\begin{bmatrix} 
M_\Theta C_{\varphi^{-1}} & 0 & 0 & 0 \\ 
\Delta_{\Theta, \varphi} |_{H^2(\cF)} & M_z^{\overline{\Delta_{\Theta, \varphi} L^2(\cF)}} & 0 & 0 \\ 
0 & 0 & M_z^{\cE_0} & 0 \\ 
0 & 0 & 0 & Y_2 
\end{bmatrix}\right)
\end{align}acting on the space
\begin{align}\label{ModelSpace}
 \begin{bmatrix}
 H^2(\cF) \\ H^2(\overline{\Delta_{\Theta, \varphi} L^2(\cF)}) \\ H^2(\cE_0) \\ \cQ_{E^*} 
\end{bmatrix}.
\end{align}

We leave it to the reader to check that if we have a tuple 
\begin{align}\label{Parameter}
\boldsymbol{\Pi}:=\{\cF,\cE_0, \Theta, u_{\cE_0}, Y_1,Y_2\}
\end{align} consisting of 
\begin{enumerate}
    \item[(P1)] Hilbert spaces $\cF,\cE_0$, 
    \item[(P2)] a bounded analytic function $\Theta:\mathbb D\to\mathcal B(\cF)$ such that $M_\Theta C_{\varphi^{-1}}$ is contractive for an automorphism $\varphi:\mathbb D\to\mathbb D$,
    \item[(P3)] a unitary operator $u_{\mathcal{E}_0}$ on $\mathcal{E}_0$, and
    \item[(P4)] a unitary pair $(Y_1,Y_2)$ such that $Y_1Y_2=Y_2\varphi(Y_1)$,
\end{enumerate}
 then the operator pair $(V_1,V_2)$ as in \eqref{ModelPair} is an isometric pair satisfying $V_1V_2=V_2\varphi(V_1)$.
 \begin{definition}
    Any set as in \eqref{Parameter} consisting of Hilbert spaces $\cF,\cE_0$, a bounded analytic function $\Theta:\mathbb D\to\mathcal B(\cF)$, a unitary operator $u_{\mathcal{E}_0}$ on $\mathcal{E}_0$, and a unitary pair $(Y_1,Y_2)$ as described in items (P1)--(P4) above will be referred to as a {\em parameter set}.
 \end{definition}
 Combining this terminology with the analysis leading to the model operator pair in \eqref{ModelPair}, we arrive at the following result.
\begin{theorem}\label{T:Model}
If $(V_1,V_2)$ is an isometric pair such that 
\begin{align}\label{Covariance}
    V_1V_2=V_2\varphi(V_1)
\end{align} for an automorphism $\varphi$ of the disk $\mathbb{D}$, then there is a parameter set $\{\cF,\cE_0, \Theta, u_{\cE_0}, Y_1,Y_2\}$ such that $(V_1,V_2)$ is jointly unitarily equivalent to the pair as in \eqref{ModelPair}. Conversely, given a parameter set $\{\cF,\cE_0, \Theta, u_{\cE_0}, Y_1,Y_2\}$, the operator pair as in \eqref{ModelPair} is an isometric pair satisfying the covariance relation \eqref{Covariance}.
\end{theorem}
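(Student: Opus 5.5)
The forward direction is essentially assembled in the construction preceding the statement, so the plan is to organize that construction into a proof and fill in the points left implicit. Starting from $(V_1,V_2)$, I would apply the Wold decomposition of $V_1$ and write $V_2$ as a $2\times 2$ block matrix. Lemma~\ref{L:WightCompos}, applied with $\varphi^{-1}$, gives $A=M_\Theta C_{\varphi^{-1}}$. Part (1) of Lemma~\ref{L:AuxLemma} gives $B=0$; it applies because $\varphi(W_1)$ is unitary whenever $W_1$ is. From $V_2^*V_2=I$ we read off (I1)--(I3). Next I would take the Wold decomposition of $E$. Lemma~\ref{L:WoldReduces}, applied with $f=\varphi$ to the pair $(W_1,E)$, splits $W_1$. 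Lemma~\ref{L:UniShiftCov} then identifies $W_1^1=I_{H^2}\otimes_\varphi u$. The compressed pair $(Y_1,Y_2)$ on $\cQ_{E^*}$ is a unitary pair satisfying the covariance relation, since both blocks are restrictions to a common reducing subspace. This verifies (P4). Condition (P2) holds because $M_\Theta C_{\varphi^{-1}}$ is a compression of the isometry $V_2$, hence contractive.

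The delicate part is the factorization $C_1=\iota\Delta_{\Theta,\varphi}|_{H^2(\cF)}$ together with the extension of $\iota$. Because $\Delta_{\Theta,\varphi}$ is a multiplication operator on $L^2(\cF)$, the identity $C_1^*C_1=P_{H^2}M_{\Delta}^2|_{H^2}$ gives $\|C_1f\|=\|\Delta_{\Theta,\varphi}f\|$. This yields a well-defined isometry $\iota$ with $C_1=\iota\Delta_{\Theta,\varphi}|_{H^2(\cF)}$. Since $\operatorname{Ran}C_1$ lies in the constants $\cD_{E^*}$, comparing the constant terms in \eqref{Intu1} gives \eqref{Intu1'}. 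Here I use that $I_{H^2}\otimes_\varphi u$ acts as $u$ on constants and that $M_\varphi$ commutes with $\Delta_{\Theta,\varphi}$.

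For the extension $\hat\iota$, I would check well-definedness and isometry on $\cD_{\text{dense}}$. The key observation is that if $\varphi^{-n}\Delta f=\varphi^{-m}\Delta g$ with $n\ge m$, then $\Delta f=\varphi^{n-m}\Delta g=\Delta(\varphi^{n-m}g)$ with $\varphi^{n-m}g\in H^2$. Iterating \eqref{Intu1'} then gives $\iota\Delta f=u^{n-m}\iota\Delta g$. Applying $u^{*n}$ shows that the two candidate values of $\hat\iota$ agree. The same reduction to a common $n$ gives inner-product preservation, because both $u$ and $M_\varphi$ are unitary. Lemma~\ref{lem:Ddense} then extends $\hat\iota$ to all of $\mathcal M$. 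I expect this well-definedness bookkeeping to be the main obstacle.

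The intertwinings $\hat\iota M_{\varphi^{\pm1}}=u^{\pm1}\hat\iota$ follow on the dense set, so $\operatorname{Ran}\hat\iota$ reduces $u$. This gives the splitting of $u$ and hence of $I_{H^2}\otimes_\varphi u$ and $M_z^{\cD_{E^*}}$ along $H^2(\mathcal M)\oplus H^2(\cE_0)$. Conjugating by $I\oplus(I_{H^2}\otimes\hat\iota^*)\oplus I$ produces \eqref{ModelPair}. The entry $\Delta_{\Theta,\varphi}|_{H^2(\cF)}$ lands in the constants of $H^2(\mathcal M)$.

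For the converse, I would check the statement block by block. The first component is a direct sum of a shift and unitaries, so it is an isometry. Writing the second component as $\sbm{X & 0\\ D & S}$ with $S$ an isometry and $\operatorname{Ran}D\subset$ constants $\perp\operatorname{Ran}S$, the product $V_2^*V_2$ equals $X^*X+D^*D$. By the computation of $C_{\varphi^{-1}}^*M_\Theta^*M_\Theta C_{\varphi^{-1}}$ on $L^2$, this sum is $I$, so $V_2$ is an isometry. For covariance, the $(1,1)$ entry holds by the easy direction of Lemma~\ref{L:WightCompos}. For the $(2,2)$ and $(3,3)$ entries I would verify $(I\otimes_\varphi w)M_z=M_z\varphi(I\otimes_\varphi w)$ directly on $z^k\cE$. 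The $(2,1)$ entry reduces to $M_\varphi\Delta=\Delta M_\varphi$ restricted to constants, and the $(4,4)$ entry is (P4).
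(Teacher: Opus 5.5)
Your proposal is correct and follows the paper's own route. The proof of Theorem~\ref{T:Model} is exactly the construction of \S\ref{SS:FunctModel}: the Wold decomposition of $V_1$, Lemmas~\ref{L:WightCompos}, \ref{L:AuxLemma}, \ref{L:WoldReduces} and \ref{L:UniShiftCov}, the factorization $C_1=\iota\Delta_{\Theta,\varphi}|_{H^2(\cF)}$, and its extension to $\hat\iota$ via Lemma~\ref{lem:Ddense}, with the converse left to the reader. What you add beyond the paper is sound and fills in precisely the steps the paper calls ``evident'' or leaves unchecked: the well-definedness and isometry check for $\hat\iota$ by passing to a common power of $\varphi^{-1}$, and the block-by-block verification of isometry and covariance for \eqref{ModelPair}.
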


\subsection{Model for bi-pure isometries satisfying an automorphic covariance relation}
The special case when both the isometries $V_1,V_2$ are shift operators is a case of interest. In the commuting case, the model pair is $(M_z,M_\Theta)$ on $H^2(\cF)$ where $\Theta$ is $\mathcal B(\cF)$-inner function such that $\Theta(0)^{*n}\to0$ as $n\to\infty$. Here we explore an analogue of this. We begin with the following result.
\begin{lemma}\label{L:WeightCompos}
    Let $\varphi \in \mathrm{Aut}(\bD)$ and $W_\varphi$ be the weighted composition operator on $H^2$ defined by
    $$
    (W_\varphi h)(z) = M_{\sqrt{\varphi'}}C_\varphi h(z)= \sqrt{\varphi'(z)} \, h(\varphi(z)).
    $$Then $W_\varphi$ is a unitary operator with $W_\varphi^*=W_{\varphi^{-1}}$.
\end{lemma}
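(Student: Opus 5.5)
The plan is to prove the statement first on the boundary, where $W_\varphi$ becomes a change of variables on $L^2(\bT)$, and then use analyticity to come back to $H^2$. We fix the branch: since $\varphi'$ is zero-free and analytic on the simply connected disk, $\sqrt{\varphi'}$ is a well-defined analytic function. Writing $\varphi(z)=e^{i\theta}\frac{a-z}{1-\bar a z}$, we have $\varphi'(z)=-e^{i\theta}\frac{1-|a|^2}{(1-\bar a z)^2}$, so $\sqrt{\varphi'}$ is a bounded analytic function, in fact analytic on a neighbourhood of $\overline{\bD}$. By Lemma \ref{L:WightCompos}, or directly from boundedness of $C_\varphi$ on $H^2$, $W_\varphi=M_{\sqrt{\varphi'}}C_\varphi$ is a bounded operator on $H^2$.

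\emph{Step 1 (isometry).} For $h\in H^2$, boundary values satisfy $(W_\varphi h)(\zeta)=\sqrt{\varphi'(\zeta)}\,h(\varphi(\zeta))$ a.e. on $\bT$. Changing variables $w=\varphi(\zeta)$ as in the computation of $C_{\varphi^{-1}}^*$ above, with $d\sigma(w)=|\varphi'(\zeta)|\,d\sigma(\zeta)$, gives
$$\|W_\varphi h\|^2=\int_\bT |\varphi'(\zeta)|\,|h(\varphi(\zeta))|^2\,d\sigma(\zeta)=\int_\bT |h(w)|^2\,d\sigma(w)=\|h\|^2 .$$
So $W_\varphi$ is an isometry.

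\emph{Step 2 (the inverse is $W_{\varphi^{-1}}$).} We compute $W_{\varphi^{-1}}W_\varphi h(z)=\sqrt{(\varphi^{-1})'(z)}\,\sqrt{\varphi'(\varphi^{-1}(z))}\,h(z)$. By the chain rule the product of the two square roots squares to $(\varphi\circ\varphi^{-1})'=1$, so it is a continuous function on the connected set $\bD$ taking values in $\{\pm1\}$, hence constant. The choice of branches therefore matters. We fix the branch of $\sqrt{(\varphi^{-1})'}$ so that this constant equals $1$; this amounts to defining $\sqrt{(\varphi^{-1})'}:=1/\bigl(\sqrt{\varphi'}\circ\varphi^{-1}\bigr)$. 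With this convention $W_{\varphi^{-1}}W_\varphi=I$, and by symmetry $W_\varphi W_{\varphi^{-1}}=I$. Thus $W_\varphi$ is a surjective isometry, i.e.\ unitary, and $W_\varphi^*=W_\varphi^{-1}=W_{\varphi^{-1}}$.

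The only delicate point is this branch consistency in Step 2: with an arbitrary choice of branches one could obtain $W_\varphi^*=-W_{\varphi^{-1}}$. I would state the convention explicitly. An alternative to Step 2 is to verify $W_\varphi^*=W_{\varphi^{-1}}$ directly by a boundary change-of-variables computation of $\langle W_\varphi f,g\rangle$, which leads to the same sign identity. The isometry computation itself is routine given the change-of-variables formula already used in the paper.
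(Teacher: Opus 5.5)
Your proof is correct and follows the paper's route: boundedness of $\sqrt{\varphi'}$, the isometry property via the boundary change of variables $w=\varphi(\zeta)$, and a direct check that $W_{\varphi^{-1}}W_\varphi=W_\varphi W_{\varphi^{-1}}=I$, which the paper leaves to the reader. Your explicit branch convention $\sqrt{(\varphi^{-1})'}:=1/(\sqrt{\varphi'}\circ\varphi^{-1})$ is a genuine improvement, because without it $W_\varphi^*=W_{\varphi^{-1}}$ holds only up to sign (unitarity itself does not depend on the branch). One small correction: boundedness of $W_\varphi$ should rest on the boundedness of $C_\varphi$, not on Lemma~\ref{L:WightCompos}, which only treats the converse direction.
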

\begin{proof}
Note that since $\varphi$ is an automorphism, $\varphi'$ is a never-vanishing holomorphic function on $\bD$ and hence $\sqrt{\varphi'}$ is holomorphic. What is actually true is that $\varphi'$ is in $H^\infty$. This can be seen by direct computation as follows. Suppose $\varphi(z) = e^{i\theta} (z - a)(1 - \overline{a}z)^{-1}$ where $a \in \bD$ and $\theta \in \mathbb{R}$. Direct computation gives $\varphi'(z) = e^{i\theta} (1 - |a|^2)(1 - \overline{a}z)^{-2}$. Since $|1 - \overline{a}z|\geq 1-|a| $ for all $z \in \overline{\bD}$, it follows that $\varphi'\in H^\infty$. Thus $W_\varphi$ is well defined on $H^2$.

To see that $\|W_\varphi h\|=\|h\|$ for every $h\in H^2$, we compute
\begin{align*}
\|W_\varphi h\|_{H^2}^2 = \frac{1}{2\pi} \int_{\bT} \left| \sqrt{\varphi'(\zeta)} \, h(\varphi(\zeta)) \right|^2 |d\zeta| = \frac{1}{2\pi} \int_{\bT} |h(\varphi(\zeta))|^2 |\varphi'(\zeta)| |d\zeta|
\end{align*}
and apply the change of variable $\eta=\varphi(\zeta)$ to note that
\begin{align*}
    \|W_\varphi h\|_{H^2}^2 = \frac{1}{2\pi} \int_{\bT} |h(\eta)|^2 |d\eta| = \|h\|_{H^2}^2.
\end{align*}

The inverse identity $W_\varphi W_{\varphi^{-1}}=W_{\varphi^{-1}}W_\varphi=I$ is easy to check and so we leave it to the reader.
\end{proof}
\begin{theorem}\label{T:biPure}
Let $(V_1, V_2)$ be a bi-pure pair of isometries acting on a Hilbert space $\mathcal{H}$ satisfying
$$
V_1 V_2 = V_2 \varphi(V_1),
$$
where $\varphi \in \mathrm{Aut}(\bD)$. Then $(V_1, V_2)$ is jointly unitarily equivalent to 
$$
(M_z^{\cD_{V_1^*}}, M_\Theta W_\varphi^*)
$$ acting on $H^2(\cD_{V_1^*})$, where $W_\varphi$ is the unitary weighted composition operator on $H^2$ as in Lemma \ref{L:WeightCompos} and $\Theta \in H^\infty(\bD, \mathcal{B}(\cD_{V_1^*}))$ is an operator-valued inner function such that $M_\Theta$ is a shift.
\end{theorem}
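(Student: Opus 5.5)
The plan is to run the first step of the model construction of Subsection~\ref{SS:FunctModel} with the unitary part of $V_1$ absent, and then rewrite the composition operator in terms of the unitary $W_\varphi$ of Lemma~\ref{L:WeightCompos}. Since $V_1$ is pure, its Wold decomposition gives $\cQ_{V_1^*}=\{0\}$, so $V_1\cong M_z^{\cF}$ on $H^2(\cF)$ with $\cF=\cD_{V_1^*}$. In the block representation of $V_2$ only the entry $A$ survives. By (M1) and Lemma~\ref{L:WightCompos} applied to $\varphi^{-1}$, we get $V_2\cong M_\Theta C_{\varphi^{-1}}$ for some $\Theta\in H^\infty(\bD,\mathcal B(\cF))$. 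So after this first step the pair is $(M_z^\cF, M_\Theta C_{\varphi^{-1}})$.

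Next I will absorb the weight. Write $\psi=\varphi^{-1}$. Then $W_\varphi^*=W_{\psi}=M_{\sqrt{\psi'}}C_{\psi}$ by Lemma~\ref{L:WeightCompos}. From the explicit formula $\psi'(z)=e^{i\theta}(1-|a|^2)(1-\bar a z)^{-2}$, both $\psi'$ and $1/\psi'$ are bounded analytic on $\bD$, so $1/\sqrt{\psi'}\in H^\infty$. Hence
$$
C_{\varphi^{-1}}=M_{1/\sqrt{\psi'}}W_\varphi^*,
\qquad
V_2\cong M_{\tTheta}W_\varphi^*,
\qquad
\tTheta:=\Theta\cdot(\psi')^{-1/2}\in H^\infty(\bD,\mathcal B(\cF)),
$$
and after renaming $\tTheta$ as $\Theta$ this is the form claimed. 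Because $W_\varphi^*$ is unitary, $V_2$ is an isometry exactly when $M_{\tTheta}$ is an isometry on $H^2(\cF)$. By the standard characterization of isometric analytic Toeplitz operators, this means $\tTheta$ is inner.

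It remains to show that $M_{\tTheta}$ is a shift, using that $V_2$ is pure. I will use the twisted commutation $W_{\varphi}^*M_\Psi=M_{\Psi\circ\varphi^{-1}}W_\varphi^*$, which follows from the definition of $W_{\varphi^{-1}}$. Iterating it gives
$$
V_2^n\cong M_{\tTheta}\,M_{\tTheta\circ\varphi^{-1}}\cdots M_{\tTheta\circ\varphi^{-(n-1)}}\,W_\varphi^{*n},
$$
so $\operatorname{Ran}V_2^n=\tTheta(\tTheta\circ\varphi^{-1})\cdots(\tTheta\circ\varphi^{-(n-1)})H^2(\cF)$. Purity of $V_2$ says that the intersection of these ranges is $\{0\}$. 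What we need is the vanishing of $\cH_u:=\bigcap_n \tTheta^nH^2(\cF)$, the unitary part of $M_{\tTheta}$.

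This comparison is the main obstacle, because the twisted products are not powers of $\tTheta$. My first attempt goes through the structure of $\cH_u$. Since $M_{\tTheta}$ commutes with $M_z$, the space $\cH_u$ is $M_z$-invariant, and it reduces $M_{\tTheta}$, on which $M_{\tTheta}$ acts unitarily. By Beurling--Lax, $\cH_u=M_\Phi H^2(\cG)$ for an inner $\Phi$, and the unitary $M_{\tTheta}|_{\cH_u}$ commutes with the shift $M_\Phi M_z M_\Phi^*$ there. A unitary commuting with a pure shift must be a constant unitary $\omega$ on $\cG$, because its Toeplitz symbol is a bounded analytic function with an analytic inverse. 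So $\tTheta\Phi=\Phi\omega$ on $\bD$. I would then try to show that $W_\varphi^*$ maps $\cH_u$ into $\operatorname{Ran}V_2^n$ for every $n$; via the range formula above this reduces to the identity $(\tTheta\circ\varphi^{-k})(\Phi\circ\varphi^{-1})=(\Phi\circ\varphi^{-1})\omega_k$, which I would attempt to derive from the covariance. That would force $\cH_u=\{0\}$.

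If this direct route fails, the fallback is to exploit the freedom in the factorization. One can also write $V_2=W_\varphi^*M_{\tTheta\circ\varphi}$, or conjugate the whole pair by a unitary that commutes with $M_z$. The aim would then be to replace $\tTheta$ by a symbol whose powers do match the twisted products, so that purity of $V_2$ transfers directly to purity of the multiplication operator.
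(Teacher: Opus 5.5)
Your reduction to $V_2\cong M_{\tTheta}W_\varphi^*$ with $\tTheta$ inner is correct. It is the route of the Remark after the theorem; the paper's proof reaches the same point more directly, using $M_\varphi=W_\varphi M_zW_\varphi^*$ so that $V_2W_\varphi$ commutes with $M_z$.

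The gap is the purity of $M_{\tTheta}$, and it cannot be closed. The identity $(\tTheta\circ\psi^{\circ k})(\Phi\circ\psi)=(\Phi\circ\psi)\,\omega_k$ has no source. The covariance relation is entirely used up in the factorization $V_2=M_{\tTheta}W_\varphi^*$, and $\tTheta\Phi=\Phi\omega$ gives only the case $k=1$. The fallback has no room either. A unitary intertwining two copies of $M_z$ also intertwines their adjoints, so it is $I_{H^2}\otimes\tau$ with $\tau$ a constant unitary. Hence $\tTheta$ is determined by the pair up to $\tau\tTheta\tau^*$ and a sign (the branch of $\sqrt{\varphi'}$), and whether $M_{\tTheta}$ is a shift is a unitary invariant of $(V_1,V_2)$.

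In fact the assertion is false when $\cD_{V_1^*}$ is infinite dimensional. Here is a counterexample.
\begin{itemize}
\item Take $\varphi(z)=(z+r)/(1+rz)$ with $r=\tanh s\in(0,1)$, and $\psi=\varphi^{-1}$. Then $\psi^{\circ k}(0)=-\tanh(ks)$ are distinct points forming a Blaschke sequence.
\item On $\cF=\ell^2(\mathbb{Z})$ with basis $\{\delta_j\}$, set $\Theta(z)\delta_j=\theta_j(z)\delta_{j+1}$, where $\theta_j\equiv 1$ for $j\ge 0$ and $\theta_{-k}=\varphi^{\circ(k-1)}$ for $k\ge 1$.
\item $M_\Theta$ sends $f\delta_j$ to $\theta_j f\delta_{j+1}$, so it is an isometry and $\Theta$ is inner.
\item $V_1=M_z^\cF$ and $V_2=M_\Theta W_\varphi^*$ satisfy $V_1V_2=V_2\varphi(V_1)$, because $M_zW_\varphi^*=W_\varphi^*M_\varphi$.
\item By your range formula, the $\delta_i$-component of $\operatorname{Ran}V_2^n$ is $\prod_{k=1}^n(\theta_{i-k}\circ\psi^{\circ(k-1)})\,H^2$. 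Every factor with $i-k<0$ equals the single automorphism $a_i:=\varphi^{\circ(-i)}$ (reading $\varphi^{\circ(-q)}=\psi^{\circ q}$), and every other factor is $1$.
\item So this component is $a_i^{\,m}H^2$ with $m\to\infty$ as $n\to\infty$; for $i=0$ it is $z^nH^2$. The intersection over $n$ is $\{0\}$, so $V_2$ is a shift.
\item However, the $\delta_0$-component of $\operatorname{Ran}M_\Theta^n$ is $\prod_{k=0}^{n-1}\varphi^{\circ k}\,H^2$, the functions vanishing at $\psi^{\circ k}(0)$ for $k<n$.
\item Hence $B\delta_0\in\bigcap_n\operatorname{Ran}M_\Theta^n$, where $B$ is the Blaschke product with zeros $\{\psi^{\circ k}(0)\}_{k\ge 0}$.
\end{itemize}
By the uniqueness remark above, this bi-pure pair admits no representation in which $M_\Theta$ is a shift.

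The paper closes this step by asserting that a non-pure $M_\Theta$ forces a nonzero $\mathcal L\subset\cF$ on which every $\Theta(z)$ is the same unitary. The example shows this fails in general. It does hold when $\dim\cF<\infty$:
\begin{itemize}
\item Non-purity then forces $r(\Theta(0))=1$.
\item A unimodular eigenvector $e$ of $\Theta(0)$ satisfies $\Theta(z)e=\lambda e$ and $\Theta(z)^*e=\bar\lambda e$ for all $z$, by the maximum principle and contractivity.
\item Then $H^2\otimes\mathbb{C}e$ reduces $V_2$ to a unitary, contradicting purity of $V_2$.
\end{itemize}
So your argument, and the theorem, can only be completed under an extra hypothesis such as $\dim\cD_{V_1^*}<\infty$, or with the shift requirement on $M_\Theta$ dropped.
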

\begin{proof}
By the Wold decomposition, $V_1$ is unitarily equivalent to $M_z \otimes I_{\mathcal{E}}$ on $H^2(\cD_{V_1^*})$. Under this unitary identification, $\varphi(V_1)$ acts as $M_\varphi \otimes I_{\mathcal{D}_{V_1^*}}$. Direct evaluation shows that $W_\varphi M_z = M_\varphi W_\varphi$, or equivalently, $M_\varphi = W_\varphi M_z W_\varphi^*$. Substituting $M_\varphi$ into the covariance identity $M_z V_2 = V_2 M_\varphi$ yields:
$$
M_z V_2 = V_2 (W_\varphi M_z W_\varphi^*) \implies M_z (V_2 W_\varphi) = (V_2 W_\varphi) M_z.
$$
Hence, the isometry $A := V_2 W_\varphi$ commutes with $M_z \otimes I_{\mathcal{D}_{V_1^*}}$. It is well known that any bounded operator on $H^2(\bD, \mathcal{E})$ commuting with $M_z \otimes I_{\mathcal{E}}$ is multiplication by an operator-valued analytic function $\Theta \in H^\infty(\bD, \mathcal{B}(\mathcal{E}))$; moreover, when the bounded operator is an isometry, the function $\Theta$ must be inner. Consequently, $V_2 W_\varphi = M_\Theta$ for some inner $\Theta$, which implies $V_2 = M_\Theta W_\varphi^*.$

It remains to check that $V_2W_\varphi = M_\Theta$ is a pure isometry. If $M_\Theta$ is not pure, then it is well known (see e.g., \cite[Section V.2]{NF_book}) that there is a nonzero subspace $\mathcal L \subset \cF$ reducing under $\Theta(z)$ for each $z$ such that $\Theta(z)|_{\mathcal L}$ is a unitary that is independent of $z$. Since $W_\varphi^*=W_{\varphi^{-1}}$ has no role on the coefficient space, it commutes with $M_\Theta$ and $W_{\varphi^{-1}}H^2(\mathcal L) = H^2(\mathcal L)$. We use these observations to note that
$$
V_2(H^2(\mathcal L)) = M_\Theta W_{\varphi^{-1}} H^2(\mathcal L) = M_\Theta  H^2(\mathcal L) = H^2(\mathcal L).
$$This contradicts the assumption that $V_2$ is a pure isometry.
\end{proof}
\begin{remark}
It is instructive to deduce Theorem \ref{T:biPure} as a special case of the general model \eqref{ModelPair}.
So let $(V_1, V_2)$ be represented as in \eqref{ModelPair} in the general functional model space as in \eqref{ModelSpace}. Since $V_1$ is a pure isometry, the unitary part in its Wold decomposition vanishes, so
$$
\{0\}=\cQ_{V_1^*} \cong H^2(\overline{\Delta_{\Theta, \varphi} L^2(\cF)}) \oplus H^2(\cE_0) \oplus \cQ_{E^*}.
$$
This forces each component space to be trivial:
$$
\cQ_{E^*} = \{0\}, \quad \cE_0 = \{0\}, \quad \text{and} \quad H^2(\overline{\Delta_{\Theta, \varphi} L^2(\cF)}) = \{0\}.
$$
In particular, $\overline{\Delta_{\Theta, \varphi} L^2(\cF)} = \{0\}$, which implies that the weighted defect operator vanishes almost everywhere on the unit circle:
$$
\Delta_{\Theta, \varphi}(\zeta) = \left( I_\cF - |\varphi'(\zeta)| \Theta(\varphi(\zeta))^* \Theta(\varphi(\zeta)) \right)^{\frac{1}{2}} = 0 \quad \text{a.e. } \zeta \in \bT.
$$
This yields
\begin{equation} \label{defect_zero}
|\varphi'(\zeta)| \Theta(\varphi(\zeta))^* \Theta(\varphi(\zeta)) = I_\cF \quad \text{a.e. } \zeta \in \bT.
\end{equation}
Making the change of variable $\eta = \varphi(\zeta) \in \bT$ (so $\zeta = \varphi^{-1}(\eta)$) and utilizing the derivative identity $|\varphi'(\varphi^{-1}(\eta))| = \frac{1}{|(\varphi^{-1})'(\eta)|}$, equation \eqref{defect_zero} becomes
\begin{equation} \label{theta_norm}
\Theta(\eta)^* \Theta(\eta) = |(\varphi^{-1})'(\eta)| I_\cF \quad \text{a.e. } \eta \in \bT.
\end{equation}
This shows that $\Theta$ is not inner and indicates that a scaling is possible to obtain one. Before we get hold of the inner function, let us note that since $V_1$ is a shift, the general model space \eqref{ModelSpace} collapses to $\cK = H^2(\cF)$, and the model operator pair simplifies to
$$
V_1 = M_z^\cF = M_z \otimes I_\cF \quad \text{and} \quad V_2 = M_\Theta C_{\varphi^{-1}}.
$$
Next, recall from Lemma \ref{L:WeightCompos} that $W_\varphi h(z) = \sqrt{\varphi'(z)} h(\varphi(z))$ is unitary on $H^2$ and
$$
(W_\varphi^* h)(z) = \sqrt{(\varphi^{-1})'(z)} \, h(\varphi^{-1}(z)).
$$
Consequently, the composition operator $C_{\varphi^{-1}}$ factors as $C_{\varphi^{-1}} = M_{\frac{1}{\sqrt{(\varphi^{-1})'}}} W_\varphi^*$. We can therefore rewrite $V_2$ as
$$
V_2 = M_\Theta C_{\varphi^{-1}} = M_\Theta M_{\frac{1}{\sqrt{(\varphi^{-1})'}}} W_\varphi^* = M_{\widetilde{\Theta}} W_\varphi^*,
$$
where $\widetilde{\Theta}(z) := \frac{\Theta(z)}{\sqrt{(\varphi^{-1})'(z)}} = \Theta(z) \sqrt{\varphi'(\varphi^{-1}(z))}$.

Since $\Theta \in H^\infty(\bD, \mathcal{B}(\cF))$ and $(\varphi^{-1})'$ is analytic, bounded, and non-vanishing on $\bD$, the function $\widetilde{\Theta}$ belongs to $H^\infty(\bD, \mathcal{B}(\cF))$. Furthermore, using \eqref{theta_norm}, its boundary values satisfy
$$
\widetilde{\Theta}(\zeta)^* \tilde{\Theta}(\zeta) = \frac{\Theta(\zeta)^* \Theta(\zeta)}{|(\varphi^{-1})'(\zeta)|} = \frac{|(\varphi^{-1})'(\zeta)| I_\cF}{|(\varphi^{-1})'(\zeta)|} = I_\cF \quad \text{a.e. } \zeta \in \bT.
$$
Thus $\widetilde{\Theta}$ is an operator-valued inner function on $\bD$. The fact that $\widetilde\Theta$ must make the multiplication operator $M_{\widetilde\Theta}$ a shift operator follows from the discussion in the last paragraph of the proof of Theorem \ref{T:biPure}.
\end{remark}

\subsection{The parameter set as a complete unitary invariant}
In this section we show that the parameter set as described in \eqref{Parameter} is a complete set of unitary invariant as stated in the theorem below.
\begin{theorem} \label{T:Coin} 
Let $(V_1,V_2)$ on $\cK$ and $(\tV_1,\tV_2)$ on $\tcK$ be two isometric pairs satisfying the automorphic covariance relations
$$
V_1 V_2 = V_2 \varphi(V_1) \quad \text{and} \quad \tV_1 \tV_2 = \tV_2 \varphi(\tV_1).
$$
Let $(V_1, V_2)$ and $(\tV_1, \tV_2)$ be unitarily equivalent to the functional model operator pair as in \eqref{ModelPair} associated with the parameter tuples $\Xi = (\cF, \cE_0, \Theta, u_{\cE_0}, Y_1, Y_2)$ and $\tXi = (\tcF, \tcE_0, \tTheta, u_{\tcE_0}, \tY_1, \tY_2)$, respectively. Then the following are equivalent:
\begin{enumerate}
\item $(V_1, V_2)$ and $(\tV_1, \tV_2)$ are jointly unitarily equivalent.

\item The associated parameter tuples $\Xi$ and $\tXi$ coincide, i.e., there exist unitary operators
$$
\tau_\cF \colon \cF \to \tcF, \quad \tau_{\cE_0} \colon \cE_0 \to \tcE_0, \quad \tau_u \colon \cK_u \to \tcK_u
$$
so that
\begin{align}
& \tau_\cF \Theta(z) = \tTheta(z) \tau_\cF \text{ for all } z \in \mathbb{D}, \notag \\
& \tau_{\cE_0} u_{\cE_0} = u_{\tcE_0} \tau_{\cE_0}, \notag \\
& \tau_u (Y_1, Y_2) = (\tY_1, \tY_2) \tau_u. \label{Equiv}
\end{align}
\end{enumerate}
\end{theorem}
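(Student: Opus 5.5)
The implication (2)$\Rightarrow$(1) is the easy direction, and I would do it first. Given $\tau_\cF,\tau_{\cE_0},\tau_u$, I would build a block-diagonal unitary $\tau$ from the model space \eqref{ModelSpace} for $\Xi$ to the one for $\tXi$. Its four diagonal entries are:
\begin{itemize}
\item $I_{H^2}\otimes\tau_\cF$ on $H^2(\cF)$;
\item $I_{H^2}\otimes\tau_\cF|$ on $H^2(\overline{\Delta_{\Theta,\varphi}L^2(\cF)})$;
\item $I_{H^2}\otimes\tau_{\cE_0}$ on $H^2(\cE_0)$;
\item $\tau_u$ on $\cQ_{E^*}$.
\end{itemize}
The second entry makes sense because $\tau_\cF\Theta(z)=\tTheta(z)\tau_\cF$ extends to boundary values. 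Together with $|\varphi'|$ being scalar, this gives $(I\otimes\tau_\cF)\Delta_{\Theta,\varphi}=\Delta_{\tTheta,\varphi}(I\otimes\tau_\cF)$. Hence $I\otimes\tau_\cF$ maps $\overline{\Delta_{\Theta,\varphi}L^2(\cF)}$ unitarily onto $\overline{\Delta_{\tTheta,\varphi}L^2(\tcF)}$, and it commutes with $M_\varphi$ there.

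Since $I\otimes\tau_\cF$ commutes with $M_z$, $C_{\varphi^{-1}}$ and $M_\varphi$, a blockwise check shows that $\tau$ intertwines both model pairs in \eqref{ModelPair}. Here one uses $\varphi^{\circ n}(\tau_{\cE_0}u_{\cE_0}\tau_{\cE_0}^*)=\tau_{\cE_0}\varphi^{\circ n}(u_{\cE_0})\tau_{\cE_0}^*$, which follows from the functional calculus.

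For (1)$\Rightarrow$(2), I would let $U$ be a unitary with $UV_j=\tV_jU$, and transport everything through the canonical constructions. Step one: $U$ intertwines $V_1$ with $\tV_1$, so by Lemma~\ref{L:AuxLemma}(2) it is block diagonal with respect to the Wold decompositions $H^2(\cF)\oplus\cQ_{V_1^*}$. Its $(1,1)$ entry is a unitary intertwining $M_z^\cF$ with $M_z^{\tcF}$, hence equals $I\otimes\tau_\cF$ with $\tau_\cF:=U|_{\cD_{V_1^*}}$, which maps $\cD_{V_1^*}$ onto $\cD_{\tV_1^*}$. Call its $(2,2)$ entry $U_2$. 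Step two: intertwining $V_2$ blockwise gives $(I\otimes\tau_\cF)M_\Theta C_{\varphi^{-1}}=M_{\tTheta}C_{\varphi^{-1}}(I\otimes\tau_\cF)$. Applying both sides to constants $e\in\cF$, which are fixed by $C_{\varphi^{-1}}$, yields $\tau_\cF\Theta(z)e=\tTheta(z)\tau_\cF e$. The same blockwise computation gives $U_2E=\tilde E U_2$, $U_2C=\tilde C(I\otimes\tau_\cF)$ and $U_2W_1=\tW_1U_2$.

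Step three: apply Lemma~\ref{L:AuxLemma}(2) again, now to $E$ and $\tilde E$. This shows $U_2$ is block diagonal on $H^2(\cD_{E^*})\oplus\cQ_{E^*}$. Its lower block $\tau_u:\cQ_{E^*}\to\cQ_{\tilde E^*}$ intertwines $(Y_1,Y_2)$ with $(\tY_1,\tY_2)$, because these are the restrictions of $W_1$ and $E$. Its upper block is $I\otimes\sigma$ for a unitary $\sigma:\cD_{E^*}\to\cD_{\tilde E^*}$. Restricting $U_2W_1=\tW_1U_2$ to constants gives $\sigma u=\tilde u\sigma$.

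The main obstacle is to show that $\sigma$ maps $\operatorname{Ran}\hat\iota$ onto $\operatorname{Ran}\hat{\tilde\iota}$; its restriction to $\cE_0$ would then give $\tau_{\cE_0}$ intertwining $u_{\cE_0}$ with $u_{\tcE_0}$. My approach is to compare $C_1=\iota\Delta_{\Theta,\varphi}|_{H^2(\cF)}$ with its tilde counterpart through $\sigma C_1=\tilde C_1(I\otimes\tau_\cF)$. The commuting relation above gives $\tilde C_1(I\otimes\tau_\cF)=\tilde\iota(I\otimes\tau_\cF)\Delta_{\Theta,\varphi}$, so $\sigma\iota=\tilde\iota(I\otimes\tau_\cF)$ on $\overline{\Delta_{\Theta,\varphi}H^2(\cF)}$. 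This identity extends to $\hat\iota$ on the dense set of Lemma~\ref{lem:Ddense}, using $\sigma u^*=\tilde u^*\sigma$ and the defining formula $\hat\iota(\varphi^{-n}\Delta f)=u^{*n}\iota(\Delta f)$. It follows that $\sigma(\operatorname{Ran}\hat\iota)=\operatorname{Ran}\hat{\tilde\iota}$, and hence $\sigma(\cE_0)=\tcE_0$. Setting $\tau_{\cE_0}:=\sigma|_{\cE_0}$ then gives the required intertwining with $u_{\cE_0}$ and $u_{\tcE_0}$.
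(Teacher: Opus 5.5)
Your proof is correct and follows essentially the paper's route. For (ii)$\Rightarrow$(i) you use the same diagonal unitary; your pointwise $\tau_\cF$ on $\overline{\Delta_{\Theta,\varphi}L^2(\cF)}$ is the paper's $\widehat{\tau_\cF}$. For (i)$\Rightarrow$(ii) you use the same ingredients: Lemma~\ref{L:AuxLemma}(2) applied twice, evaluation on constants, and the density of Lemma~\ref{lem:Ddense} combined with the intertwining of the coefficient unitaries.

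The only difference is bookkeeping. You work in the intrinsic coordinates $\iota,\hat\iota,u$ of the construction, while the paper works with a unitary between model spaces, where $\hat\iota$ is the inclusion. There your identity $\sigma\hat\iota=\hat{\tilde\iota}(I\otimes\tau_\cF)$ becomes $\sigma_{\cG,11}=\widehat{\tau_\cF}$ together with the block-diagonality of $\sigma_\cG$. Running the construction on a model pair returns its own parameters, so your version also covers parameter tuples that are not the canonical ones.
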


\begin{proof}[Proof of (ii) $\Rightarrow$ (i):]
 Since $\tau_\cF \Theta(z) = \tTheta(z) \tau_\cF$, it follows that $\tau_\cF \Theta(\zeta)^* \Theta(\zeta) = \tTheta(\zeta)^* \tTheta(\zeta) \tau_\cF$ for almost every $\zeta \in \mathbb{T}$. Hence $\tau_\cF$ intertwines the weighted defect operators pointwise on $\mathbb{T}$:
$$
\tau_\cF \Delta_{\Theta, \varphi}(\zeta) = \Delta_{\tTheta, \varphi}(\zeta) \tau_\cF \quad \text{a.e. } \zeta \in \mathbb{T}.
$$
The unitary $\tau_\cF$ gives rise to another unitary operator $\widehat{\tau_\cF} \colon \overline{\Delta_{\Theta, \varphi} L^2(\cF)} \to \overline{\Delta_{\tTheta, \varphi} L^2(\tcF)}$ defined on the dense set $\{\varphi^{-n} \Delta_{\Theta, \varphi} f : n \geq 0, f \in H^2(\cF)\}$ by
\begin{align}\label{WideHatTau}
\widehat{\tau_\cF} \left( \varphi^{-n} \Delta_{\Theta, \varphi} f \right) = \varphi^{-n} \Delta_{\tTheta, \varphi} \tau_\cF f.
\end{align}
Note that $\widehat{\tau_\cF}$ satisfies the intertwining property
$$
\widehat{\tau_\cF} M_\varphi|_{\overline{\Delta_{\Theta, \varphi} L^2(\cF)}} = M_\varphi|_{\overline{\Delta_{\tTheta, \varphi} L^2(\tcF)}} \widehat{\tau_\cF}.
$$
Using \eqref{WideHatTau}, a direct matrix calculation shows that the diagonal unitary operator
\begin{align}\label{Tau_General}
\begin{bmatrix}
I_{H^2} \otimes \tau_\cF & & & \\
& I_{H^2} \otimes \widehat{\tau_\cF} & & \\
& & I_{H^2} \otimes \tau_{\cE_0} & \\
& & & \tau_u
\end{bmatrix} \colon
\begin{bmatrix}
H^2(\cF) \\
H^2(\overline{\Delta_{\Theta, \varphi} L^2(\cF)}) \\
H^2(\cE_0) \\
\cK_u
\end{bmatrix} \to
\begin{bmatrix}
H^2(\tcF) \\
H^2(\overline{\Delta_{\tTheta, \varphi} L^2(\tcF)}) \\
H^2(\tcE_0) \\ 
\tcK_u
\end{bmatrix}
\end{align}
identifies the functional models of $(V_1, V_2)$ and $(\tV_1, \tV_2)$.

\medskip

\noindent
{\textit{Proof of (i) $\Rightarrow$ (ii):}} Assume that 
$$
\tau \colon \begin{bmatrix} H^2(\cF) \\ H^2(\overline{\Delta_{\Theta, \varphi} L^2(\cF)}) \\ H^2(\cE_0) \\ \cK_u \end{bmatrix} 
\to
 \begin{bmatrix} H^2(\tcF) \\ H^2(\overline{\Delta_{\tTheta, \varphi} L^2(\tcF)}) \\ H^2(\tcE_0) \\ \tcK_u \end{bmatrix}
$$
is unitary and $\tau (V_1, V_2) = (\tV_1, \tV_2) \tau$, where $(V_1, V_2)$ and $(\tV_1, \tV_2)$ are in the model form \eqref{ModelPair} corresponding to parameter tuples $\Xi$ and $\tXi$, respectively.

The operators $V_1$ and $\tV_1$ have the forms
$$
\begin{bmatrix} M_z^\cF & 0 \\ 0 & W \end{bmatrix} \quad \text{and} \quad
\begin{bmatrix} M_z^\tcF & 0 \\ 0 & \tW \end{bmatrix}
$$
where $W$ and $\tW$ are unitaries. Since $\tau$ intertwines $V_1$ and $\tV_1$, Part (2) of Lemma \ref{L:AuxLemma} implies that $\tau$ must be of the form
\begin{align}\label{Part2Lem}
\tau = \begin{bmatrix} I_{H^2} \otimes \tau_\cF & 0 \\ 0 & \sigma \end{bmatrix}
\end{align}
for some unitary operator $\tau_\cF \colon \cF \to \tcF$ and
$$
\sigma \colon \begin{bmatrix} H^2(\overline{\Delta_{\Theta, \varphi} L^2(\cF)}) \\ H^2(\cE_0) \\ \cK_u \end{bmatrix}
\to \begin{bmatrix} H^2(\overline{\Delta_{\tTheta, \varphi} L^2(\tcF)}) \\ H^2(\tcE_0) \\ \tcK_u \end{bmatrix}
$$
is a unitary such that it intertwines
\begin{align}\label{TheWs}
W_1 = \begin{bmatrix} I_{H^2} \otimes_\varphi M_\varphi & 0 & 0 \\ 0 & I_{H^2} \otimes_\varphi u_{\cE_0} & 0 \\ 0 & 0 & Y_1 \end{bmatrix} \quad \text{and} \quad
\tW_1 = \begin{bmatrix} I_{H^2} \otimes_\varphi M_\varphi & 0 & 0 \\ 0 & I_{H^2} \otimes_\varphi u_{\tcE_0} & 0 \\ 0 & 0 & \tY_1 \end{bmatrix}.
\end{align}
Since $\tau$ also intertwines $V_2$ and $\tV_2$, comparing the (1,1) block gives
\begin{align*}
(I_{H^2}\otimes \tau_\cF)M_{\Theta}C_{\varphi^{-1}} = M_{\Theta'}C_{\varphi^{-1}}(I_{H^2}\otimes \tau_\cF),
\end{align*}which, when evaluated on the $\cF$-valued constant functions in $H^2(\cF)$, gives
\begin{align}\label{ConstantEvaluation}
\tau_\cF \Theta(z)= \Theta'(z)\tau_\cF.
\end{align}As noted in the proof of (ii)$\Rightarrow$(i), this readily gives  a unitary operator $\widehat{\tau_\cF}: \overline{\Delta_{\Theta,\varphi}L^2(\cF)}\to \overline{\Delta_{\widetilde\Theta,\varphi}L^2(\widetilde\cF)}$ such that
$$
\widehat{\tau_\cF} \Delta_{\Theta, \varphi}|_{H^2(\cF)} = (\Delta_{\tTheta, \varphi}|_{H^2(\tcF)}) \tau_\cF.
$$Comparison of the (1,2) block of $\tau V_2=\widetilde V_2\tau$ gives
\begin{align}\label{12Block}
\sigma \begin{bmatrix} \Delta_{\Theta, \varphi} \\ 0 \\ 0 \end{bmatrix} = \begin{bmatrix} \Delta_{\tTheta, \varphi} \\ 0 \\ 0 \end{bmatrix} \tau_\cF
\end{align}
while the comparison of the (2,2) block gives
\begin{align*}
\sigma \begin{bmatrix} M_z^\cG & 0 \\ 0 & Y_2 \end{bmatrix} = \begin{bmatrix} M_z^\tcG & 0 \\ 0 & \tY_2 \end{bmatrix} \sigma,
\end{align*}
where $\mathcal G = \begin{bmatrix} \overline{\Delta_{\Theta, \varphi} L^2(\cF)} \\ \cE_0 \end{bmatrix}$ and $\tcG = \begin{bmatrix} \overline{\Delta_{\tTheta, \varphi} L^2(\tcF)} \\ \tcE_0 \end{bmatrix}$.

Applying Part (2) of Lemma \ref{L:AuxLemma} again to $\sigma$ implies that $\sigma$ is of the form
\begin{align}\label{TheSigma}
\sigma = \begin{bmatrix} I_{H^2} \otimes \sigma_\cG & 0 \\ 0 & \tau_u \end{bmatrix}
\end{align}
for some unitary operators $\sigma_\cG \colon \cG \to \tcG$ and $\tau_u \colon \cK_u \to \tcK_u$ satisfying
\begin{align}\label{Part2Again}
(I_{H^2} \otimes \sigma_\cG) M_z^\cG = M_z^\tcG (I_{H^2} \otimes \sigma_\cG) \quad \text{and} \quad \tau_u Y_2 = \tY_2 \tau_u.
\end{align}
Representing $\sigma_\cG \colon \cG \to \tcG$ as the $2 \times 2$ matrix
$$
\sigma_\cG = \begin{bmatrix} \sigma_{\cG, 11} & \sigma_{\cG, 12} \\ \sigma_{\cG, 21} & \tau_{\cE_0} \end{bmatrix} \colon \begin{bmatrix} \overline{\Delta_{\Theta, \varphi} L^2(\cF)} \\ \cE_0 \end{bmatrix} \to \begin{bmatrix} \overline{\Delta_{\tTheta, \varphi} L^2(\tcF)} \\ \tcE_0 \end{bmatrix},
$$
it follows from \eqref{12Block} that
\begin{align}\label{12Block'}
\sigma_{\cG, 11} \Delta_{\Theta, \varphi}|_{H^2(\cF)} = (\Delta_{\tTheta, \varphi}|_{H^2(\tcF)}) \tau_\cF.
\end{align}
Since $\sigma$ intertwines $W_1$ and $\tW_1$ as in \eqref{TheWs}, restriction to the coefficient space $\cG$ yields
\begin{align}\label{CoeffSigma}
\sigma_{\cG, 11} M_\varphi = M_\varphi \sigma_{\cG, 11}, \quad \tau_{\cE_0} u_{\cE_0} = u_{\tcE_0} \tau_{\cE_0}, \quad \text{and} \quad \tau_u Y_1 = \tY_1 \tau_u.
\end{align}
Combining $\sigma_{\cG, 11} M_\varphi = M_\varphi \sigma_{\cG, 11}$ with \eqref{12Block'}, we calculate for every $n \geq 0$ and $f \in H^2(\cF)$:
\begin{align*}
\sigma_{\cG, 11} \left( \varphi^{-n} \Delta_{\Theta, \varphi} f \right) &= \varphi^{-n} \sigma_{\cG, 11} \left( \Delta_{\Theta, \varphi} f \right) \\
&= \varphi^{-n} \Delta_{\tTheta, \varphi} \tau_\cF f = \widehat{\tau_\cF} \left( \varphi^{-n} \Delta_{\Theta, \varphi} f \right).
\end{align*}
Since $\{\varphi^{-n} \Delta_{\Theta, \varphi} f : n \geq 0, f \in H^2(\cF)\}$ is dense in $\overline{\Delta_{\Theta, \varphi} L^2(\cF)}$, we conclude that
$$
\sigma_{\cG, 11} = \widehat{\tau_\cF}.
$$
Because $\sigma_\cG$ is unitary and its $(1,1)$-entry $\sigma_{\cG, 11} = \widehat{\tau_\cF}$ is itself unitary, $\sigma_\cG$ must be block-diagonal:
$$
\sigma_\cG = \begin{bmatrix} \widehat{\tau_\cF} & 0 \\ 0 & \tau_{\cE_0} \end{bmatrix} \colon \begin{bmatrix} \overline{\Delta_{\Theta, \varphi} L^2(\cF)} \\ \cE_0 \end{bmatrix} \to \begin{bmatrix} \overline{\Delta_{\tTheta, \varphi} L^2(\tcF)} \\ \tcE_0 \end{bmatrix}.
$$
Thus $\tau$ is precisely of the form \eqref{Tau_General}. Finally, \eqref{ConstantEvaluation}, \eqref{Part2Again}, and \eqref{CoeffSigma} confirm that $\tau_\cF, \tau_{\cE_0}, \tau_u$ satisfy all coincidence conditions in \eqref{Equiv}.
\end{proof}
\section{Concluding Remarks}
In this paper, we restricted our attention to disk automorphisms. Several intermediate structural steps rely heavily on the bijectivity and invertibility of automorphisms—properties that present substantial new challenges when considering more general maps, such as finite Blaschke products. For instance:
\begin{enumerate}
    \item When $\varphi$ is an automorphism, the right covariance relation $V_1 V_2 = V_2 \varphi(V_1)$ is equivalent to the left covariance relation $\varphi^{-1}(V_1) V_2 = V_2 V_1$. This equivalence generally fails when $\varphi$ is not an automorphism.
    \item Lemma~\ref{L:WeightCompos} does not seem to hold for general finite Blaschke products.
\end{enumerate}
Both Lemma~\ref{L:WeightCompos} and the duality between left and right covariance relations were crucial in establishing the model in \S \ref{SS:FunctModel} -- see item~(M1).

In a subsequent paper, we intend to address these obstacles and extend the functional model construction to isometric pairs governed by general finite Blaschke products. Furthermore, we plan to develop an analogue of the classical Berger--Coburn--Lebow functional model tailored to this dynamically twisted framework. We note that a Berger--Coburn--Lebow-type model was recently established in \cite{BallSau-qComm} for the special case of $q$-commuting isometric pairs corresponding to $\varphi(z) = qz$ with $|q| = 1$.

\end{document}